\documentclass[reqno]{amsart}

\usepackage{amsmath,mathtools,amssymb,amsthm}
\mathtoolsset{showonlyrefs}
\allowdisplaybreaks
\usepackage{xcolor}
\usepackage{graphicx}
\usepackage[inline]{enumitem}
\usepackage{algorithm}
\usepackage{algpseudocode}
\usepackage{geometry}
\numberwithin{equation}{section}

\usepackage[
  backend=biber,
  style=authoryear-comp,
  natbib=true,
  sorting=nyt,
  doi=false,
  isbn=false,
  url=true,
  eprint=true,
  giveninits=true,
  useprefix=true,
  uniquename=false,
  uniquelist=false,
  maxbibnames=99,
  maxcitenames=2,
  dashed=false,
  backref=false
]{biblatex}
\AtEveryBibitem{\clearlist{language}\clearfield{note}\clearfield{issn}\clearfield{isbn}}
\DeclareNameAlias{sortname}{family-given}
\DeclareNameAlias{author}{family-given}
\DeclareNameAlias{editor}{family-given}
\DeclareDelimFormat{finalnamedelim}{\addspace\&\space}
\DeclareDelimFormat[bib]{nametitledelim}{\addspace}
\renewbibmacro{in:}{\ifentrytype{article}{}{\printtext{\bibstring{in}\intitlepunct}}}
\DeclareFieldFormat[article]{volume}{\mkbibbold{#1}}
\DeclareFieldFormat{pages}{#1}
\DeclareFieldFormat[incollection,inproceedings]{pages}{pp.~#1}
\DeclareFieldFormat{eid}{#1}
\renewcommand*{\bibpagespunct}{\addcomma\space}
\renewbibmacro*{journal+issuetitle}{%
  \usebibmacro{journal}%
  \ifhmode\spacefactor1000 \fi
  \setunit{\addcomma\space}%
  \printfield{volume}%
  \setunit{\addcomma\space}%
  \printfield{pubstate}%
  \clearfield{pubstate}%
  \newunit}
\renewbibmacro*{publisher+location+date}{%
  \printlist{location}%
  \iflistundef{location}{}{\spacefactor1000 }%
  \iflistundef{publisher}{\setunit*{\addcomma\space}}{\setunit*{\addcolon\space}}%
  \printlist{publisher}%
  \newunit}
\renewbibmacro*{note+pages}{%
  \setunit{\bibpagespunct}%
  \iffieldundef{pages}{\printfield{eid}}{\printfield{pages}}%
  \newunit}
\DefineBibliographyStrings{english}{inpress = {to appear}}
\DeclareFieldFormat[article]{url}{}
\DeclareBibliographyDriver{online}{%
  \usebibmacro{bibindex}%
  \usebibmacro{begentry}%
  \usebibmacro{author/editor+others/translator+others}%
  \setunit{\printdelim{nametitledelim}}\newblock
  \usebibmacro{title}%
  \newunit\newblock
  \usebibmacro{eprint}%
  \newunit\newblock
  \usebibmacro{finentry}}
\AtBeginBibliography{\emergencystretch=2em}
\DeclareFieldFormat{eprint:arxiv}{%
  arXiv\addcolon
  \ifhyperref
    {\href{https://arxiv.org/abs/#1}{\nolinkurl{#1}}}
    {\nolinkurl{#1}}%
  \iffieldundef{eprintclass}{}{\addspace\texttt{\mkbibbrackets{\thefield{eprintclass}}}}}
\newbibmacro*{titlelink}[1]{%
  \iffieldundef{doi}
    {\iffieldundef{url}
       {\iffieldundef{eprint}{#1}{\href{https://arxiv.org/abs/\thefield{eprint}}{#1}}}
       {\href{\thefield{url}}{#1}}}
    {\href{https://doi.org/\thefield{doi}}{#1}}}
\DeclareFieldFormat{title}{\usebibmacro{titlelink}{\mkbibemph{#1}}}
\DeclareFieldFormat[article,inproceedings,incollection,online,software,unpublished,misc]{title}{\usebibmacro{titlelink}{#1}}

\usepackage[pdfencoding=auto, psdextra]{hyperref}
\hypersetup{
    plainpages=false,
    unicode=true,
    pdftoolbar=true,
    pdfmenubar=true,
    pdffitwindow=true,
    pdftitle={Oriented point-cloud varifolds for estimating the perimeter and computing its Wasserstein gradient flow},
    pdfauthor={Tokuhiro Eto and Noboru Isobe},
    pdfsubject={Mullins-Sekerka flow, oriented varifolds, point clouds},
    pdfcreator={Creator},
    pdfproducer={Producer},
    pdfkeywords={oriented varifold; perimeter; point cloud; Mullins-Sekerka flow; minimizing movements},
    pdfnewwindow=true,
    colorlinks=true,
    linkcolor=blue,
    citecolor=blue,
    filecolor=blue,
    urlcolor=magenta,
    urlbordercolor=0 1 1
}

\usepackage{nicefrac}
\makeatletter
\renewcommand\paragraph{\@startsection{paragraph}{4}{\z@}{\medskipamount}{-\fontdimen2\font}{\normalfont\itshape}}
\makeatother
\usepackage{zref-clever}
\zcsetup{cap=true,abbrev=false,nameinlink=true,ref=default,endrange=ref}
\newcommand{\crefname}[3]{%
  \zcRefTypeSetup{#1}{Name-sg={#2},name-sg={#2},Name-pl={#3},name-pl={#3}}%
}
\let\cref\zcref
\let\Cref\zcref

\newtheorem{theorem}{Theorem}[section]
\newtheorem{lemma}[theorem]{Lemma}
\newtheorem{corollary}[theorem]{Corollary}
\newtheorem{proposition}[theorem]{Proposition}

\theoremstyle{definition}
\newtheorem{definition}[theorem]{Definition}
\newtheorem{assumption}[theorem]{Assumption}
\theoremstyle{remark}
\newtheorem{remark}[theorem]{Remark}
\AddToHook{env/lemma/begin}{\zcsetup{reftype=lemma}}
\AddToHook{env/corollary/begin}{\zcsetup{reftype=corollary}}
\AddToHook{env/proposition/begin}{\zcsetup{reftype=proposition}}
\AddToHook{env/conjecture/begin}{\zcsetup{reftype=conjecture}}
\AddToHook{env/definition/begin}{\zcsetup{reftype=definition}}
\AddToHook{env/assumption/begin}{\zcsetup{reftype=assumption}}
\AddToHook{env/remark/begin}{\zcsetup{reftype=remark}}
\AddToHook{env/equation/begin}{\zcsetup{reftype=}}
\AddToHook{env/enumerate/begin}{\zcsetup{reftype=}}
\makeatletter
\AtBeginDocument{%
  
  \newenvironment{proof}[1][\proofname]{\par
    \pushQED{\qed}%
    \normalfont\topsep6\p@\@plus6\p@\relax
    \labelsep5\p@
    \trivlist
    \item[\hskip\labelsep\bfseries\itshape #1\@addpunct{.}]\ignorespaces
  }{\popQED\endtrivlist\@endpefalse}%
}
\makeatother

\newcommand{\Haus}[1]{\mathcal{H}^{#1}}
\newcommand{\dH}[1]{\d\Haus{#1}}
\newcommand{\R}{\mathbb{R}}

\newcommand{\mlfr}[1]{\psi_{#1}}
\DeclareMathOperator{\supp}{supp}

\DeclareMathOperator{\dist}{dist}
\DeclareMathOperator{\Div}{div}

\newcommand{\cutoff}[1]{\eta_{#1}}

\newcommand{\PP}{\mathbb P}
\newcommand{\EE}{\mathbb E}

\newcommand{\Lip}{\mathrm{Lip}}

\newcommand{\measure}[0]{|\mu|}
\newcommand{\measureVector}[0]{\vv{\mu}}
\newcommand{\measureMassEmpirical}[1]{|\mu|_{#1}}
\newcommand{\measureVectorEmpirical}[1]{\measureVector_{#1}}

\newcommand{\characteristics}[1]{\chi_{#1}}

\usepackage{physics2}
\usephysicsmodule{ab}
\usephysicsmodule{ab.braket}
\usephysicsmodule{diagmat}
\usephysicsmodule[showleft=2,showtop=2]{xmat}
\usepackage[normal]{fixdif}
\usepackage[italic]{derivative}
\usepackage[d]{esvect}
\newcommand{\abb}[1]{\bab{#1}}
\NewDocumentCommand{\abs}{s o m}{%
  \IfBooleanTF{#1}{\vab*{#3}}{%
    \IfValueTF{#2}{\vab[#2]{#3}}{%
      \mathchoice{\vab{#3}}{\vab*{#3}}{\vab*{#3}}{\vab*{#3}}}}}
\NewDocumentCommand{\norm}{s o m}{%
  \IfBooleanTF{#1}{\Vab*{#3}}{%
    \IfValueTF{#2}{\Vab[#2]{#3}}{%
      \mathchoice{\Vab{#3}}{\Vab*{#3}}{\Vab*{#3}}{\Vab*{#3}}}}}

\newcommand{\Z}{\mathbb{Z}}

\newcommand{\order}[1]{\mathrm{O}(#1)}
\newcommand{\Expectation}[1]{\EE \abb{#1}}
\newcommand{\ExpectationGiven}[2]{\EE\left[#1\,\middle|\,#2\right]}
\newcommand{\ExpectationSample}[1]{\ExpectationGiven{#1}{X_1,\dots,X_N}}

\def\Set#1{\Setdef#1\Setdef}
\def\Setdef#1|#2\Setdef{\left\{#1\,\;\mathstrut\vrule\,\;#2\right\}}%
\makeatletter
\newcommand{\fmd}[0]{\overline n}
\newcommand{\sci}[2]{#1\times10^{#2}}
\newcommand{\fmdbranch}[1]{\overline n_{#1}}

\DeclareMathOperator{\Per}{Per}
\DeclareMathOperator{\median}{median}
\DeclareMathOperator{\rank}{rank}
\newcommand{\transpose}{^{\top}}

\DeclareMathOperator*{\argmin}{{arg~min}}

\DeclareMathOperator{\BL}{BL}
\zcRefTypeSetup{equation}{Name-sg={},name-sg={},Name-pl={},name-pl={},namesep={}}
\crefname{theorem}{Theorem}{Theorems}
\crefname{definition}{Definition}{Definitions}
\crefname{problem}{Problem}{Problems}
\crefname{fact}{Fact}{Facts}
\crefname{proposition}{Proposition}{Propositions}
\crefname{lemma}{Lemma}{Lemmas}
\crefname{conjecture}{Conjecture}{Conjectures}
\crefname{assumption}{Assumption}{Assumptions}
\crefname{condition}{Condition}{Conditions}
\newlist{steps}{enumerate}{1}
\setlist[steps]{label=\rm(\roman*),ref=(\roman*)}
\crefname{step}{Step}{Steps}
\zcsetup{countertype={enumi=condition,stepsi=step}}
\crefname{claim}{Claim}{Claims}

\crefname{remark}{Remark}{Remarks}
\crefname{example}{Example}{Examples}
\crefname{corollary}{Corollary}{Corollaries}
\crefname{subsubsection}{Subsubsection}{Subsubsections}
\crefname{subsection}{Subsection}{Subsections}
\crefname{section}{Section}{Sections}
\crefname{chapter}{Chapter}{Chapters}
\crefname{table}{Table}{Tables}
\crefname{figure}{Figure}{Figures}
\zcRefTypeSetup{figure}{Name-sg-ab={Fig.},name-sg-ab={Fig.},Name-pl-ab={Figs},name-pl-ab={Figs}}
\crefname{algorithm}{Algorithm}{Algorithms}

\crefname{myenumi}{item}{items}
\crefname{myenumii}{item}{items}
\crefname{myenumiii}{item}{items}
\makeatother

\newcommand{\paperabstract}{%
This paper develops a mesh-free numerical method for the one-phase Mullins--Sekerka flow, in which the interface bounding a set moves as the Wasserstein gradient flow of its perimeter.
One difficulty in computing this flow is that two pieces of the interface with opposite normals form a hidden boundary when components merge.
Our approach represents the interface by an oriented point cloud, which we treat as an oriented varifold.
We introduce a quantity that measures how much nearby normals cancel each other and use it to detect the hidden boundary.
Based on this quantity, we construct a perimeter estimator, prove that its expected error decays at an algebraic rate in the number of points, also in the presence of a hidden boundary, and use it in a minimizing movement scheme for the flow in two dimensions.
Numerical experiments show that the scheme computes mergers of components through the hidden boundary, while the perimeter term in the scheme decreases at every step and the area is nearly conserved.
As hidden boundaries also appear in other perimeter-driven interface problems, we expect our oriented representation to be applicable to them as well.}
\begin{document}

\title[Oriented point-cloud varifolds]{Oriented point-cloud varifolds for estimating the perimeter and computing its Wasserstein gradient flow}
\author{Tokuhiro Eto}
\address{Universit\'e Claude Bernard Lyon 1, CNRS, Centrale Lyon, INSA Lyon, Universit\'e Jean Monnet, ICJ UMR5208, Villeurbanne, France}
\email{eto@math.univ-lyon1.fr}
\author{Noboru Isobe}
\address{RIKEN Center for Advanced Intelligence Project, Tokyo, Japan}
\email{noboru.isobe@riken.jp}
\begin{abstract}
\paperabstract
\end{abstract}
\subjclass[2020]{49Q15, 49Q20, 49Q22, 35R35, 62G05, 65M75}
\keywords{oriented varifold; perimeter; point cloud; Mullins--Sekerka flow; minimizing movements}
\maketitle
\section{Introduction}
In this paper, we develop a variational numerical scheme for a perimeter-minimizing interface evolution.
Our motivating model is the one-phase Mullins--Sekerka flow \citep{MullinsSekerka63}, the quasi-static interface motion in which the normal velocity is the flux of a potential that is harmonic inside the phase and whose boundary values are the curvature.
It arises as a sharp-interface limit of the Cahn--Hilliard equation with disparate mobilities \citep{KroemerLaux22}.
Otto's variational formulation~\citep{Otto98} interprets the one-phase flow as the gradient flow of the perimeter with respect to the Wasserstein metric.
The corresponding minimizing movement scheme is formally represented as
\begin{equation}
\label{eq:mms-otto}
        E^{n} \in \argmin_{E\subset\Omega}\Bab{\Per(E;\Omega) + \frac{1}{2h}W_2^2(\chi_E\d x, \chi_{E^{n-1}}\d x)},
\end{equation}
where $\Omega$ is a bounded domain in $\R^d$ with $d\geq 2$, $\Per(E;\Omega)$ is the perimeter of a set $E$ in $\Omega$, $h>0$ is a time step, $\chi_E$ is the characteristic function of $E$, and $W_2$ is the Wasserstein distance between two finite measures on $\R^d$ of equal total mass.
The interface is the boundary $\partial E$ of the phase $E$.
\citet{ChambolleLaux21} showed that a subsequence of \((E^n)_n\) converges as $h\to0$ to a limit satisfying a distributional formulation of the flow.

One difficulty here is that the interface can form what \citet{Roger2004,Roger2005} calls the \emph{hidden boundary}.
Intuitively, it is the part of the interface that vanishes when several connected components of $E$ merge in time (see the top of \cref[abbrev]{fig:num-two-ellipses}).
Just before vanishing, this part consists of two pieces that lie on top of each other with opposite normals, and when they vanish, $\Per(E;\Omega)$ can drop discontinuously by twice their $(d-1)$-dimensional measure.
Analytical convergence results often exclude the hidden boundary by assuming that no perimeter is lost in the limit \citep{LS95,LauxOtto16}.
Numerical simulations, however, cannot exclude it because connected components merge in many applications, for example in phase coarsening \citep{AndrewsVoorheesThornton20,Ramani26} and for bubbles in Hele-Shaw cells \citep{KeelerEtAl22}.
They require a numerical method that handles the hidden boundary.

Most existing numerical methods for the Mullins--Sekerka flow deal with the hidden boundary only at the price of a grid on $\Omega$ or a remeshing of the interface.
Implicit representations describe the phase by an auxiliary field on a fixed computational grid, as in \textit{phase-field methods} \citep{FengProhl05,G03}, \textit{level-set methods} \citep{ChenMerrimanOsherSmereka97,HouLiOsherZhao97} together with the implicit boundary integral method \citep{CKT17}, and \textit{threshold-type methods} for the related Muskat problem \citep{JacobsKimMeszaros21}.
For these methods, the hidden boundary causes no difficulty, but they require a grid on $\Omega$ fine enough to resolve the interface.
Perimeter estimates by graph cuts on random samples of $\Omega$ instead of a structured grid are also known \citep{Trillos16,Trillos17}, but they still require samples in $\Omega$ dense enough to resolve the interface.
Explicit representations describe the interface by a moving mesh, that is, by nodes on the interface together with edges between neighbouring nodes, as in front-tracking methods with parametric finite elements \citep{BGN10,R23,EGN24,EGN26-2}, boundary integral methods \citep{HouLowengrubShelley94,BCD95,ZCH96,BB00} and the method of fundamental solutions \citep{SakakibaraYazaki19,E24,E26}.
These methods give direct access to the interface, but a change of topology requires an explicit remeshing of these edges.

The aim of this paper is to handle the hidden boundary with neither a grid on $\Omega$ nor a moving mesh of the interface.
One natural representation of the interface is a point cloud, which consists of nodes without edges.
\citet{ZHH26} develop a point-cloud boundary integral method, which orients the normals from the known interior of a single smooth closed curve.
\citet{BuetRumpf22} represent curves and surfaces by a point-cloud varifold, which assigns a weight and an unoriented tangent space to each point, and compute their mean curvature motion.
Neither method can represent the hidden boundary, in which two pieces of the interface lie on top of each other with opposite normals.

We propose to employ oriented point clouds, a representation of curves and surfaces in computer vision and computer graphics \citep{KazhdanBolithoHoppe06,PengEtAl21,KaltenmarkCharlierCharon17}, for the Mullins--Sekerka flow with the hidden boundary.
An oriented point cloud discretizes the interface $\partial E$ by a set $\{(X_i,U_i)\}_{i=1}^N\subset\Omega\times S^{d-1}$ of pairs of a point $X_i$ and a unit normal $U_i$.
We treat such a cloud as an oriented varifold, namely the weighted sum $V_N\coloneqq\frac1N\sum_{i=1}^N W_i\delta_{(X_i,U_i)}$ of Dirac measures, whose weights $W_i$ are estimated from the cloud itself with a bandwidth $\delta$ by the method of \citet{Boricaud_Buet25}, as detailed in \cref{sec:estimation}.
Oriented varifolds in the spirit of \citet{Hutchinson86} are finite non-negative Radon measures on $\Omega\times S^{d-1}$, and they describe a smooth $\partial E$ by the measure concentrated on the graph $\Set{(x,n_E(x)) | x\in\partial E}$ of its outer unit normal $n_E$.

Our main idea to overcome the difficulty posed by the hidden boundary is to detect it in an oriented varifold $V$ by what we call the \emph{visibility},
\[
\begin{aligned}
q_\sigma[V](x)\coloneqq\frac{\abs{(\psi_\sigma\ast\measureVector)(x)}}{(\psi_\sigma\ast\measure)(x)}, &&
\measure(A)\coloneqq V(A\times S^{d-1}), &&
\measureVector(A)\coloneqq\int_{A\times S^{d-1}}n\d V(x,n),
\end{aligned}
\]
where $\psi_\sigma$ is a mollifier at scale $\sigma>0$, the ratio is set to $0$ where $(\psi_\sigma\ast\measure)(x)=0$, and the weight measure $\measure$ and the current $\measureVector$ of $V$ are defined for measurable $A\subset\Omega$.
The weight measure $\measure$ counts every piece of the interface positively, whereas two pieces with opposite normals cancel in the current $\measureVector$.
Hence $q_\sigma$ is close to $1$ where the normals nearby agree and close to $0$ on the hidden boundary.

As our first contribution, we use the visibility $q_\sigma$ to construct an estimator $\widehat P_{\sigma,\delta,N}$ of $\Per(E;\Omega)$ in \cref{eq:mms-otto}.
More generally, we can use $\widehat P_{\sigma,\delta,N}$ to estimate a generalized perimeter $P(V;\Omega)\coloneqq\abs{\measureVector}(\Omega)$ of an oriented varifold $V$ from which the oriented points are sampled.
This perimeter coincides with $\Per(E;\Omega)$ when $V$ is the oriented varifold of a set $E$ of finite perimeter (\cref{rem:perimeter}).
The estimator is computed from the oriented varifold $V_N$ as
\[
\widehat P_{\sigma,\delta,N}\coloneqq\frac1N\sum_{i=1}^N W_i\,q_\sigma[V_N](X_i).
\]
We expect $\widehat P_{\sigma,\delta,N}$ to approximate $P(V;\Omega)$ because $q_\sigma[V_N]$ discounts the contribution of the hidden boundary, which $P(V;\Omega)$ does not count.
Under geometric assumptions on $V$ and for independent samples, our theoretical results (\cref{thm:quantitative-perimeter-consistency,cor:explicit-rate-with-BB-bandwidth}) show that the expected value of the error of $\widehat P_{\sigma,\delta,N}$
decays at an algebraic rate in $N$ for a choice of $(\sigma,\delta)$, also when $V$ has a hidden boundary.

As our second contribution, we use $q_\sigma$ and $\widehat P_{\sigma,\delta,N}$ to construct a grid- and mesh-free numerical scheme for \cref{eq:mms-otto} with $d=2$ (\cref{sec:application}).
In each step, we replace $\Per(E;\Omega)$ by $\widehat P_{\sigma,\delta,N}$, with the visibility taken from the previous step, and $W_2^2(\chi_E\d x,\chi_{E^{n-1}}\d x)$ by its linearization, which we evaluate by a boundary integral method on the point cloud.
We need additional techniques in the implementation to conserve the areas and barycentres as the flow does, to prevent the points from becoming too non-uniform along the interface and to deal with changes of topology.
We note that the scheme temporarily reconstructs the connectivity of some points from the point cloud to mitigate the ill-conditioning of the minimization at a change of topology.
The numerical results in \cref[abbrev]{fig:num-two-ellipses} show that the scheme can simulate the Mullins--Sekerka flow through the hidden boundary of two merging ellipses.
The convergence of the scheme as $h\to0$ and $N\to\infty$ remains open.

The remainder of the paper is organized as follows.
\cref{sec:oriented_varifolds} recalls oriented varifolds and defines their perimeter, and \cref{sec:estimation} defines the visibility and the estimator $\widehat P_{\sigma,\delta,N}$.
\cref{sec:mathematical-analysis} states and proves \cref{thm:quantitative-perimeter-consistency} and \cref{cor:explicit-rate-with-BB-bandwidth}.
\cref{sec:application} constructs the numerical scheme for \cref{eq:mms-otto} with $d=2$, and \cref{sec:numerical} compares the estimation error with the bound of \cref{thm:quantitative-perimeter-consistency} and presents the evolution of a non-convex closed curve, the merger of a disk and an annulus and the merger of two ellipses.
\cref{app:implementation} gives the details of the implementation, and \cref{sec:proofs} proves the rate of the empirical weight measure (\cref{prop:imported-scalar-carrier-theorem}).

\paragraph{Notation.}
The unit sphere of $\R^d$ is $S^{d-1}$, $B_r(x)$ is the open ball centred at $x\in\R^d$ with radius $r>0$, $\dist(x,A)$ the Euclidean distance from $x$ to a set $A$, $\abs{E}$ the Lebesgue measure of a set $E\subset\R^d$ (we write $\d x$ for the Lebesgue measure), $\Haus{k}$ the $k$-dimensional Hausdorff measure and $\nu\llcorner A$ the restriction of a measure $\nu$ to $A$.
We write $\delta_z$ for the Dirac mass at $z$ and $\abs{\nu}$ for the total variation measure of a (possibly vector-valued) measure $\nu$.
If $\lambda$ is a measure on a space $X$ and $x\mapsto\kappa_x$ is a measurable family of finite measures on a space $Y$, $\lambda\otimes\kappa_x$, also written $\kappa_x(\d y)\lambda(\d x)$, denotes the measure on $X\times Y$ acting as $\varphi\mapsto\int_X\int_Y\varphi(x,y)\d\kappa_x(y)\d\lambda(x)$.
The space of compactly supported $C^1$ functions is $C_c^1(X;\R^m)$, $\norm{f}_\infty$ is the supremum norm (of the Euclidean norm for vector-valued $f$), $\Lip(f)$ the Lipschitz constant, $\BL(K)$ the space of bounded Lipschitz functions on a compact set $K$ with the norm $\norm{f}_{\textup{BL}(K)}\coloneqq\norm{f}_{L^\infty(K)}+\Lip(f)$ and $C^{k,\alpha}$ the H\"older spaces.
Unless otherwise stated, $C$ denotes a positive constant that depends only on the varifold $V$, the kernel $\mlfr{}$, the sampling density $\theta$ of \cref{ass:sampling-density}, the truncation level $\tau$ and the dimension $d$, in particular not on $\sigma$, $\delta$ and $N$, and whose value may change from line to line.
A unit vector in $S^{d-1}$ is denoted by $n$, as in $V(\d x,\d n)$ for an oriented varifold $V$ in \cref{def:oriented_varifolds} below.

\section{Oriented varifolds and the perimeter functional}
\label{sec:oriented_varifolds}

Two features of the one-phase Mullins--Sekerka flow guide the choice of the framework.
First, the interface is the boundary between the phase and its complement, so that a normal direction carries the phase information.
Second, once several pieces of the interface may lie on top of each other or the interface is approximated by a point cloud, oppositely oriented contributions may coexist at the same spatial location.
We therefore work with \emph{oriented varifolds}, which are measures on $\Omega\times S^{d-1}$, with the unit sphere $S^{d-1}$ in place of the Grassmannian of unoriented hyperplanes of $\R^d$.

\begin{definition}[Oriented varifold]\label{def:oriented_varifolds}
    A finite non-negative Radon measure $V$ on $\Omega\times S^{d-1}$ is called an oriented varifold on $\Omega$.
\end{definition}
For an oriented varifold $V$, one can define associated measures on $\Omega$.

\begin{definition}[Weight measure and current]
Let $V$ be an oriented varifold. Then the weight measure $\measure$ is the finite non-negative Radon measure on $\Omega$ defined by
\begin{equation}
\label{eq:intro-weight-measure}
\measure \coloneqq (\pi_\Omega)_\# V,
\end{equation}
with the projection $\pi_\Omega(x,n)\coloneqq x$, that is, $\measure(A)=V(A\times S^{d-1})$ for every Borel set $A\subset\Omega$.
Likewise, the current $\measureVector$ associated with $V$ is the finite $\R^d$-valued Radon measure on $\Omega$ defined by
\begin{equation}
\label{eq:intro-first-moment-current}
\begin{aligned}
\measureVector(\varphi)
\coloneqq
\int_{\Omega \times S^{d-1}} \varphi(x) \cdot n \d V(x,n)
&&
\text{for every compactly supported } \varphi \in C(\Omega;\R^d).
\end{aligned}
\end{equation}
\end{definition}

\begin{remark}[Relation with Hutchinson's oriented integral varifolds]
\label{rem:hutchinson}
The preceding definition is a measure-theoretic version of the current associated with an oriented varifold by \citet{Hutchinson86}.
In the rectifiable hypersurface case, an oriented integral varifold may be written, up to the identification between oriented tangent planes and unit normals, in the form
\(V = \Haus{d-1}\llcorner\Gamma \otimes \pab{ m_+(x)\delta_{n(x)} + m_-(x)\delta_{-n(x)} }\), where $\Gamma$ is a $(d-1)$-rectifiable set with unit normal $n$ and \(m_\pm(x)\in\Z_{\ge0}\) are the multiplicities of the two orientations.
Then
\(\measure = (m_+ + m_-)\Haus{d-1}\llcorner \Gamma\) and \(\measureVector = (m_+ - m_-)n\Haus{d-1}\llcorner \Gamma\).
The weight measure thus has the total multiplicity $m_+ + m_-$, whereas the current has the signed excess $m_+ - m_-$.
\end{remark}

We recall from \citet[Definition 3.35]{Ambrosio_Fusco_Pallara00} the definition of the perimeter of a set.
\begin{definition}[Perimeter functional]
    Let $E\subset\Omega$ be measurable.
    Then the perimeter of $E$ in $\Omega$ is defined by
    \[
\Per(E;\Omega) \coloneqq \sup\Set{\int_E \Div \varphi(x) \d x | \varphi \in C_c^1(\Omega;\R^d),\norm{\varphi}_\infty \le 1}.
    \]
    We say that $E$ has finite perimeter in $\Omega$ if $\Per(E;\Omega)<\infty$, and then $\Per(E;\Omega)=\abs{D\chi_E}(\Omega)$, where $D\chi_E$ is the distributional derivative of $\chi_E$.
\end{definition}
We now define a corresponding perimeter functional for oriented varifolds.

\begin{definition}[Perimeter functional for oriented varifolds]
    \label{def:perimeter-for-oriented-varifolds}
    Let $V$ be an oriented varifold. Then the perimeter of $V$ in $\Omega$ is defined as the total variation of $\measureVector$ in $\Omega$, namely
    \[
P(V;\Omega) \coloneqq \sup\Set{\measureVector(\varphi) | \varphi \in C_c^1(\Omega;\R^d),\norm{\varphi}_\infty \le 1} = \abs{\measureVector}(\Omega).
    \]
\end{definition}

\begin{remark}[Connection with finite-perimeter sets]
\label{rem:perimeter}
Let $E\subset\Omega$ be a set of finite perimeter with reduced boundary $\partial^\ast E$ and measure-theoretic outer unit normal $n_E$, so that $D\chi_E=-n_E\Haus{d-1}\llcorner\partial^\ast E$ \citep[Theorems 3.36 and 3.59]{Ambrosio_Fusco_Pallara00}.
The oriented varifold $V_E\coloneqq\Haus{d-1}\llcorner\partial^\ast E\otimes\delta_{n_E}$ satisfies $\measureVector=-D\chi_E$, the Gauss--Green measure of $E$ in the terminology of \citet[Remark 12.2]{Maggi12}, and hence $P(V_E;\Omega)=\abs{D\chi_E}(\Omega)=\Per(E;\Omega)$.
In this sense, the current $\measureVector$ extends the Gauss--Green measure from sets to oriented varifolds.
\end{remark}

\begin{definition}[First moment density]

Let $V$ be an oriented varifold on $\Omega$.
By the disintegration theorem \citep[Theorem 2.28]{Ambrosio_Fusco_Pallara00}, there exists a measurable family $\Set{\kappa_x | x\in \Omega}$ of probability measures on $S^{d-1}$ such that
\(
V(\d x,\d n)=\kappa_x(\d n)\measure(\d x).
\)
Using this family of measures, the first moment density is defined by
\(
\fmd(x) \coloneqq \int_{S^{d-1}} n \d \kappa_x(n).
\)
\end{definition}
Inserting this disintegration into \cref{eq:intro-first-moment-current} and using that $\kappa_x$ is a probability measure on $S^{d-1}$, we obtain
\begin{equation}
\label{eq:T_V-is-mV-lambdaV}
\begin{aligned}
\measureVector = \fmd\measure,
&&
\abs{\fmd(x)} \le 1
\text{ for } \measure\text{-a.e. } x \in \Omega.
\end{aligned}
\end{equation}

\begin{definition}[Hidden boundary]
\label{def:hidden-boundary}
Let $V$ be an oriented varifold.
The \emph{hidden boundary} of $V$ is the part $\measure - \abs{\measureVector} = (1-\abs{\fmd})\measure$ of the weight measure that is cancelled in the current.
The set $\Set{x | \abs{\fmd(x)}<1}$, on which it is concentrated, is also called the hidden boundary.
\end{definition}

\begin{remark}[Hidden boundaries and the measures $\kappa_x$]
\label{rem:hidden-boundary-samples-allowed}
The measure $\kappa_x$ may charge both orientations at the same point.
For example, if
\(
\kappa_x = \frac12 \delta_{n(x)} + \frac12 \delta_{-n(x)}\),
then $\fmd(x)=0$.
The weight measure then charges $x$ while the current vanishes there, and $x$ belongs to the hidden boundary.
This is the way in which two coincident pieces of the interface with opposite orientations are represented in this paper, with the weight measure $\measure$ counted once (see \cref{ass:piecewise-regular-support}).
For the varifold of \cref{rem:hutchinson}, the hidden boundary is $2\min\Bab{m_+,m_-}\Haus{d-1}\llcorner\Gamma$.
\end{remark}

\section{Estimation of the perimeter of oriented varifolds}
\label{sec:estimation}
In this section, we introduce the visible perimeter functional, which gives the perimeter term in the scheme of \cref{sec:application}, and its estimator from an oriented point cloud. We also state the assumptions on the support and on the first moment density under which the estimator is analysed in \cref{sec:mathematical-analysis}.
The visible perimeter compares the current with the weight measure after mollification at a scale $\sigma$, and approximates the perimeter $P(V;\Omega)$ of \cref{def:perimeter-for-oriented-varifolds}.

\subsection{Visible perimeter functional}

First, we introduce the kernel $\mlfr{}$ used for local averaging.
\begin{assumption}[Kernel]
\label{ass:kernel}
Let $\mlfr{}\in C_c^1(\R^d;[0,\infty))$ be radial with $\int_{\R^d}\mlfr{}(x)\d x=1$, and assume that there exist constants $0<r_0<R_0$ and $0<c_{\mlfr{}}\le C_{\mlfr{}}$ with
\begin{equation}
\label{eq:kernel-comparison}
\begin{aligned}
c_{\mlfr{}} \characteristics{B_{r_0}(0)} \le \mlfr{}\le C_{\mlfr{}} \characteristics{B_{R_0}(0)}
&&
\text{on } \R^d.
\end{aligned}
\end{equation}
For $\sigma>0$, the mollifier with scale $\sigma$ is
\begin{equation}
\label{eq:kernel-rescaling}
\mlfr{\sigma}(x) \coloneqq \frac{1}{\sigma^d} \mlfr{}\pab{\frac{x}{\sigma}},
\end{equation}
so that $\int_{\R^d}\mlfr{\sigma}(x)\d x=1$.
\end{assumption}

\begin{remark}
\label{rem:wendland}
In this paper, we choose the kernel $\mlfr{}$ to be the Wendland kernel \citep{Wendland95} defined by
$\mlfr{}(x) \coloneqq c_d\max\Bab{0,1-\abs{x}}^4(4\abs{x}+1)$ for $x\in\R^d$, where $c_d>0$ is chosen so that $\int_{\R^d}\mlfr{}(x)\d x=1$.
\end{remark}

We now define the visible perimeter functional.

\begin{definition}[Visible perimeter functional]
    \label{def:coherent-perimeter-functional}
For $\sigma > 0$, the weight measure and the current are mollified by convolution with the kernel $\mlfr{\sigma}$,
\[
\begin{aligned}
    (\mlfr{\sigma} \ast \measure)(x) \coloneqq \int_{\R^d}\mlfr{\sigma}(x-y)\d \measure(y),
&&
(\mlfr{\sigma} \ast \measureVector)(x) \coloneqq \int_{\R^d}\mlfr{\sigma}(x-y)\d\measureVector(y),
\end{aligned}
\]
and the \emph{visibility} is defined by their ratio:
\[
q_\sigma[V](x)
\coloneqq
\begin{cases}
\dfrac{\abs{(\mlfr{\sigma} \ast \measureVector)(x)}}{(\mlfr{\sigma} \ast \measure)(x)},&\text{if }(\mlfr{\sigma} \ast \measure)(x) > 0,\\
0&\text{otherwise}.
\end{cases}
\]
Since $(\mlfr{\sigma} \ast \measureVector)(x)/(\mlfr{\sigma} \ast \measure)(x)$ is the $\mlfr{\sigma}(x-\cdot)\d V$-weighted average of the orientations $n$, the visibility is the length of the mean of the local unit normals (for fixed $x$, the mean resultant length of directional statistics, see \citealp{MardiaJupp00}). It equals $1$ where all normals near $x$ agree, and it vanishes where the mollified current vanishes, for instance where two pieces of the interface with opposite normals cancel exactly within the support of $\mlfr{\sigma}(x-\cdot)$ (a hidden boundary in the sense of \cref{def:hidden-boundary}).
The \emph{visible perimeter} functional is
\[
P_\sigma(V) \coloneqq \int_\Omega q_\sigma[V](x) \d \measure(x).
\]
\end{definition}
Since $\abs{\mlfr{\sigma}\ast\measureVector}\le\mlfr{\sigma}\ast\abs{\measureVector}\le\mlfr{\sigma}\ast\measure$ by \cref{eq:T_V-is-mV-lambdaV}, one has $0 \le q_\sigma[V](x) \le 1$.

\subsection{Geometric support assumptions}
Below, we impose geometric assumptions on the support $\Gamma \coloneqq \supp{\measure}$ of the weight measure and on the first moment density $\fmd$.
The assumptions on $\Gamma$ and on the sampling density $\theta$ of \cref{ass:sampling-density} are close to the regularity class of \citet[Section 6]{Boricaud_Buet25}.

\begin{assumption}[Piecewise regular support]
\label{ass:piecewise-regular-support}
There exist finitely many compact sets $\Gamma_1,\dots,\Gamma_L \subset \Omega$, which we call \emph{branches}, and a set $\Sigma \subset \Omega$ which is a finite union of compact $C^1$ submanifolds of dimension $d-2$ (a finite set if $d=2$), such that
\(
\Gamma = \bigcup_{l=1}^L \Gamma_l\) and
\(\Gamma_l\cap\Gamma_{l'} \subset \Sigma\) for \(l\ne l'\).
Moreover,
\begin{enumerate}[label=\textup{(\roman*)},ref=(\roman*)]
\item each $\Gamma_l$ is an embedded $(d-1)$-dimensional $C^{1,\alpha}$ manifold with (possibly empty) boundary $\partial\Gamma_l\subset\Sigma$, for a fixed exponent $\alpha \in (0,1]$,
\item\label{item:weight-measure-hausdorff} the weight measure is
\(\measure = \Haus{d-1} \llcorner \Gamma\),
\item\label{item:branch-separation} there exist $c_{\textup{s}}\in(0,1]$ and $r_{\textup{sep}}>0$ such that, with $d_\Sigma(y)\coloneqq\min\Bab{\dist(y,\Sigma),r_{\textup{sep}}}$ for $y\in\R^d$ ($d_\Sigma\equiv r_{\textup{sep}}$ if $\Sigma=\emptyset$),
\begin{equation}
\label{eq:branch-separation-global}
\begin{aligned}
\dist(x,\Gamma_{l'}) \ge c_{\textup{s}}d_\Sigma(x)
&&
\text{for every } x\in\Gamma_l \text{ and every } l'\ne l.
\end{aligned}
\end{equation}
\end{enumerate}
\Cref{item:branch-separation} rules out distinct branches meeting tangentially at $\Sigma$. Since the branches are compact and meet only in $\Sigma$, it suffices to require \cref{eq:branch-separation-global}, with a possibly smaller $c_{\textup{s}}$, for $x$ with $\dist(x,\Sigma)\le r_{\textup{sep}}$. For instance, it holds there if $\Gamma_l\pitchfork\Gamma_{l'}$ for all $l\ne l'$.
When $\Sigma=\emptyset$ (for instance, when $\Gamma$ is a finite union of disjoint closed surfaces), it only states that distinct branches are at positive distance from each other.
\Cref{item:weight-measure-hausdorff} fixes the multiplicity of $\Gamma$ to $1$, and hidden boundaries are carried by $\kappa_x$ as in \cref{rem:hidden-boundary-samples-allowed}.
\end{assumption}

The first moment density is allowed to be only piecewise H\"older continuous.

\begin{assumption}[Piecewise H\"older first moment density]
\label{ass:piecewise-holder-mV}
There exists an exponent $\beta \in (0,1]$ such that for each $l\in\{1,\dots,L\}$ there is a map $\fmdbranch{l}\in C^{0,\beta}(\Gamma_l;\R^d)$ with $\fmd=\fmdbranch{l}$ $\measure$-almost everywhere on $\Gamma_l$.
Since $\measure(\Sigma)=0$ (\cref{lem:ahlfors-regularity} below) and the sets $\Gamma_l\setminus\Sigma$ are pairwise disjoint, we fix the representative $\fmd\coloneqq\fmdbranch{l}$ on $\Gamma_l\setminus\Sigma$, so that $\fmd$ is defined at every point of $\Gamma\setminus\Sigma$.
\end{assumption}

\begin{lemma}[Ahlfors regularity and singular neighbourhoods]
\label{lem:ahlfors-regularity}
Under \cref{ass:piecewise-regular-support}, there exist constants $c_\Gamma, C_\Gamma, r_\Gamma > 0$ such that
\begin{equation}
\label{eq:ahlfors-regularity}
c_\Gamma r^{d-1} \le \measure(B_r(x)) \le C_\Gamma r^{d-1}
\end{equation}
for every $x \in \Gamma$ and every $r \in (0,r_\Gamma]$.
This property is called the Ahlfors regularity of $\measure$.
Moreover, there exists a constant $C_\Sigma > 0$ such that, for every $r \in (0,r_\Gamma]$,
\begin{equation}
\label{eq:singular-neighbourhood-measure}
\measure\pab{\Set{ x \in \Gamma | \dist(x,\Sigma) \le r }} \le C_\Sigma r.
\end{equation}
In particular, $\measure(\Sigma)=0$.
\end{lemma}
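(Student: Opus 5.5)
The plan is to work branch by branch, using that each $\Gamma_l$ is a compact embedded $C^{1,\alpha}$ manifold with boundary contained in $\Sigma$. Since $\measure=\Haus{d-1}\llcorner\Gamma$ and $L$ is finite, we have
\[
\measure(B_r(x))=\Haus{d-1}\pab{\Gamma\cap B_r(x)}\le\sum_{l}\Haus{d-1}(\Gamma_l\cap B_r(x)).
\]
So the upper bound in \cref{eq:ahlfors-regularity} reduces to a uniform upper bound for each branch, for every $x\in\R^d$ (not only $x\in\Gamma_l$). The lower bound reduces to a lower bound for the single branch containing $x$.

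\textbf{Upper bound.} By compactness, cover $\Gamma_l$ by finitely many charts. In each chart, $\Gamma_l$ is, after a rotation, the graph of a $C^{1,\alpha}$ function $g$ over a relatively open subset of a closed half-space (or of $\R^{d-1}$) in $\R^{d-1}$, with $\norm{\nabla g}_\infty\le M$. Let $\rho>0$ be a Lebesgue number of the cover and take $r\le\rho/2$. Then $\Gamma_l\cap B_r(x)$ lies in one chart. Its projection lies in a $(d-1)$-ball of radius $r$, so the area formula gives
\[
\Haus{d-1}(\Gamma_l\cap B_r(x))\le(1+M^2)^{(d-1)/2}\omega_{d-1}r^{d-1}.
\]

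\textbf{Lower bound.} This is the main obstacle, because $x$ may lie on or near $\partial\Gamma_l$. For the boundary case I will use a $C^1$ collar of $\partial\Gamma_l$. Since $\Gamma_l$ is a compact $C^{1,\alpha}$ manifold with boundary, there are charts $\phi$, bi-Lipschitz with uniform constant $\Lambda$, from open subsets of the half-space $\R^{d-1}_+$ onto neighbourhoods in $\Gamma_l$. Each chart contains a half-ball of some uniform radius $s_0$ around the preimage of each point. Then $\phi(B^+_{r/\Lambda}(\phi^{-1}x))\subset B_r(x)\cap\Gamma_l$ for $r\le\Lambda s_0$. The region $B_{r/\Lambda}(\phi^{-1}x)\cap\R^{d-1}_+$ has measure at least $\tfrac12\omega_{d-1}(r/\Lambda)^{d-1}$, since any ball centred in the closed half-space has at least half its volume there. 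The area formula with Jacobian bounded below by $\Lambda^{-(d-1)}$ then gives
\[
\measure(B_r(x))\ge c_\Gamma r^{d-1}.
\]
We set $r_\Gamma$ to be the minimum of these radii.

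\textbf{Neighbourhood of $\Sigma$.} I expect to use the $(d-2)$-dimensional structure of $\Sigma$ via a covering argument. Each compact $C^1$ submanifold composing $\Sigma$ has upper Minkowski-type content bounds: it can be covered by at most $C r^{-(d-2)}$ balls $B_r(z_j)$ with $z_j\in\Sigma$, using local graph charts and a grid of mesh $r$ (for $d=2$, $\Sigma$ is finite, so the count is constant). Then
\[
\Set{x\in\Gamma|\dist(x,\Sigma)\le r}\subset\bigcup_j B_{2r}(z_j).
\]
Applying the upper bound proved above for arbitrary centres, with $2r$ kept within the admissible range after shrinking $r_\Gamma$ and enlarging the constants, gives
\[
\measure\le C r^{-(d-2)}\,C(2r)^{d-1}=C_\Sigma r.
\]
Letting $r\to0$ yields $\measure(\Sigma)=0$, because $\Sigma$ is contained in every such neighbourhood.
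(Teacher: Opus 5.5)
Your proposal is correct and follows essentially the same route as the paper. Both use branchwise chart estimates for the two Ahlfors bounds, with a half-ball at boundary points for the lower bound, and both cover $\Sigma$ by $Cr^{-(d-2)}$ balls and then apply the upper bound. The only differences are cosmetic: you prove the upper bound for balls with arbitrary centres and shrink $r_\Gamma$. The paper instead places each ball of radius $2r$ meeting $\Gamma$ inside a ball of radius $4r$ centred on $\Gamma$, and absorbs the range $r_\Gamma/4<r\le r_\Gamma$ into the constant $C_\Sigma$.
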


\begin{proof}
A compact $C^1$ manifold $\Gamma_l$ with boundary has $(d-1)$-dimensional density bounded above and below at small scales, uniformly in the point (at a boundary point the ball $B_r(x)$ still contains a half-ball portion of $\Gamma_l$), and since there are finitely many manifolds the constants can be chosen uniformly in $l$.
A ball $B_r(x)$ centred at $x\in\Gamma$ meets at most $L$ of the manifolds, so that the upper bound holds for $\Gamma$ after changing the constant, and the lower bound holds because $x$ belongs to at least one $\Gamma_l$.
This proves \cref{eq:ahlfors-regularity}.
For \cref{eq:singular-neighbourhood-measure}, the $r$-neighbourhood of a compact $C^1$ submanifold of dimension $d-2$ is covered by $Cr^{-(d-2)}$ balls of radius $2r$. A ball of radius $2r$ that meets $\Gamma$ is contained in a ball of radius $4r$ centred at a point of $\Gamma$, which carries $\measure$-mass at most $C_\Gamma(4r)^{d-1}$ by \cref{eq:ahlfors-regularity} if $4r\le r_\Gamma$. Hence the total mass is at most $4^{d-1}CC_\Gamma r$ for $r\le r_\Gamma/4$, and for $r_\Gamma/4<r\le r_\Gamma$ the bound holds once $C_\Sigma\ge4\measure(\Gamma)/r_\Gamma$.
For $d=2$, $\Sigma$ is a finite set and the bound follows directly from \cref{eq:ahlfors-regularity}.
\end{proof}

We fix
\begin{equation}
\label{eq:sigma-zero}
\sigma_0\coloneqq\min\Bab{1,\ \frac{r_\Gamma}{R_0+1},\ \frac{c_{\textup{s}}\min\{r_\Gamma,r_{\textup{sep}}\}}{R_0}}
\end{equation}
with $r_\Gamma$ as in \cref{lem:ahlfors-regularity} and $c_{\textup{s}}$, $r_{\textup{sep}}$ as in \cref{ass:piecewise-regular-support}.
Since $R_0\sigma\le r_\Gamma$ for $\sigma\le\sigma_0$, \cref{eq:kernel-comparison,eq:ahlfors-regularity} give, with $c_\mu\coloneqq c_{\mlfr{}}c_\Gamma r_0^{d-1}$,
\begin{equation}
\label{eq:U-sigma-comparable}
\begin{aligned}
(\mlfr{\sigma}\ast\measure)(x)
\ge
\frac{c_{\mlfr{}}}{\sigma^d} \measure(B_{r_0\sigma}(x))
\ge
c_\mu \sigma^{-1}
&&
\text{for every } x \in \Gamma \text{ and every } \sigma \in (0,\sigma_0].
\end{aligned}
\end{equation}

\subsection{Sampling model and empirical visible perimeter}
We introduce the sampling model of \citet{Boricaud_Buet25}, weights that correct for non-uniform sampling, and the estimator of the visible perimeter.
First, we assume that the positions of the samples have a density $\theta$ with respect to $\measure$ and impose a regularity assumption on $\theta$.
\begin{assumption}[Sampling density]
\label{ass:sampling-density}
There exist a function $\theta$ on $\Gamma$ and $0 < \theta_- \le \theta_+ < \infty$ such that
\begin{equation}
\label{eq:sampling-density-bounds}
\begin{aligned}
\theta_- \le \theta(x) \le \theta_+
&&
\text{for } \Haus{d-1}\text{-a.e. } x \in \Gamma,
&&
\int_\Gamma\theta\d\measure=1,
\end{aligned}
\end{equation}
and we write $\mu_X\coloneqq\theta\measure$ for the probability measure with density $\theta$.
Assume moreover that $\theta$ is piecewise $C^{0,\gamma}$ on the decomposition in \cref{ass:piecewise-regular-support}, that is, there exists $\gamma\in(0,1]$ such that for each $l\in\{1,\dots,L\}$ there is $\theta_l\in C^{0,\gamma}(\Gamma_l)$ with $\theta=\theta_l$ $\measure$-almost everywhere on $\Gamma_l$.
As in \cref{ass:piecewise-holder-mV}, we fix the representative $\theta\coloneqq\theta_l$ on $\Gamma_l\setminus\Sigma$.
\end{assumption}

We now state the assumption on the law of the oriented samples $(X_i,U_i)$.

\begin{assumption}[i.i.d.~oriented samples]
\label{ass:marked-iid-model}
Let $(X_i,U_i)_{i=1}^N$ be independent and identically distributed (i.i.d.) random variables, defined on a probability space with probability measure $\PP$, with values in $\Omega \times S^{d-1}$ and with common law
\(
\mu_{X,U}(\d x,\d n)
=
\theta(x)\kappa_x(\d n)\measure(\d x)\),
whose first marginal is $\mu_X$.
Throughout, $\EE$ denotes the expectation with respect to $\PP$ and $\ExpectationSample{\cdot}$ (resp.\ $\ExpectationGiven{\cdot}{X_i=x}$) the conditional expectation given the positions (resp.\ given $X_i=x$).
In particular,
\begin{equation}
\label{eq:conditional-mean-of-marks}
\begin{aligned}
\ExpectationGiven{U_i}{X_i=x} = \fmd(x)
&&
\text{for } \mu_X\text{-almost every } x \in \Gamma.
\end{aligned}
\end{equation}
\end{assumption}

From the sample $(X_i,U_i)$ and from weights $W_i^{\delta,N}$, we introduce an empirical weight measure and an empirical current, which correspond to the weight measure $\measure$ and to the current $\measureVector$ of \cref{eq:intro-weight-measure,eq:intro-first-moment-current}.

\begin{definition}[Empirical weight measure and current]
\label{def:empirical-mass-and-current}
    The empirical weight measure is the finite non-negative Radon measure on $\Omega$ defined by
\begin{equation}
\label{eq:corrected-carrier-measure}
\measureMassEmpirical{\delta,N}
\coloneqq
\frac{1}{N} \sum_{i=1}^N W_i^{\delta,N} \delta_{X_i},
\end{equation}
where $W_i^{\delta,N}\ge0$ are weights to be defined in \cref{eq:weights-from-BB}.
In addition, define the empirical current by
\begin{equation}
\label{eq:corrected-current-measure}
\measureVectorEmpirical{\delta,N}
\coloneqq
\frac{1}{N} \sum_{i=1}^N W_i^{\delta,N} U_i \delta_{X_i}.
\end{equation}
\end{definition}

Before stating \cref{prop:imported-scalar-carrier-theorem}, we introduce the bounded-Lipschitz distance, the metric used by \citet{Boricaud_Buet25}.
\begin{definition}[Bounded-Lipschitz distance]
\label{def:bounded-lipschitz-distance}
Let $K \subset \overline\Omega$ be compact.
For finite signed Radon measures $\nu_1,\nu_2$ on $K$, define
\begin{equation}
\label{eq:bounded-lipschitz-distance}
d_{\textup{BL},K}(\nu_1,\nu_2)
\coloneqq
\sup\Set{ \int_K \varphi \d (\nu_1-\nu_2) | \varphi \in \BL(K),\ \norm{\varphi}_{\textup{BL}(K)} \le 1 }.
\end{equation}
\end{definition}
If $\nu_1,\nu_2$ are supported in a compact set $K\subset K'\subset\overline\Omega$, then $d_{\textup{BL},K}(\nu_1,\nu_2)\le d_{\textup{BL},K'}(\nu_1,\nu_2)$, because every $\varphi\in\BL(K)$ extends to $K'$ with the same Lipschitz constant by the McShane extension theorem \citep{McShane1934} and, after truncation to $[-\norm{\varphi}_{L^\infty(K)},\norm{\varphi}_{L^\infty(K)}]$, with the same norm.

We now define the weights $W_i^{\delta,N}$, following \citet[Section 4]{Boricaud_Buet25}.
For a bandwidth $\delta>0$, their density estimator is
\[
\begin{aligned}
    \theta_{\delta,N}(x) \coloneqq \frac{1}{N Z_{\mlfr{}}^{(d-1)}\delta^{d-1}} \sum_{i=1}^N \mlfr{}\pab{\frac{x-X_i}{\delta}},
    &&
    Z_{\mlfr{}}^{(d-1)} \coloneqq \int_{\R^{d-1}}\mlfr{}(x',0)\d x',
\end{aligned}
\]
that is, the uncorrected empirical weight measure $\frac{1}{N}\sum_{i=1}^N \delta_{X_i}$ is convolved with the kernel rescaled with respect to the \emph{intrinsic} dimension $d-1$ of the support, as in \citet[Section 3.2]{Boricaud_Buet25} (since $\mlfr{}$ is radial, $Z_{\mlfr{}}^{(d-1)}$ is the integral of $\mlfr{}$ over any hyperplane through the origin). This rescaling differs from the ambient one $\mlfr{\delta} = \delta^{-d}\mlfr{}(\cdot/\delta)$ of \cref{eq:kernel-rescaling}, which is used for the visibility.
For a truncation level $\tau\in(0,\theta_-]$, let $\Phi_\tau(t) \coloneqq \cutoff{\tau}(t) / t$ for $t>0$ and $\Phi_\tau(0)\coloneqq0$, where
\(
\cutoff{\tau}(t) \coloneqq \min\Bab{1,\max\Bab{0,\nicefrac{2t}{\tau}-1}}
\)
vanishes for $t\le\tau/2$, equals $1$ for $t\ge\tau$ and is linear in between, and define the corrected weights by
\begin{equation}
\label{eq:weights-from-BB}
\begin{aligned}
W_i^{\delta,N} \coloneqq \Phi_\tau(\theta_{\delta,N}(X_i)),
&&
i=1,\dots,N.
\end{aligned}
\end{equation}
Since $\cutoff{\tau}(t)/t = 2/\tau - 1/t \le 1/\tau$ for $t\in[\tau/2,\tau)$ and $\cutoff{\tau}(t)/t = 1/t \le 1/\tau$ for $t \ge \tau$, we have
\begin{equation}
\label{eq:weights-bounded}
\begin{aligned}
0 \le W_i^{\delta,N} \le \frac{1}{\tau}
&&
\text{for every } i\in \{1,\dots,N\}
\end{aligned}
\end{equation}
(see also \citealp[Lemma 4.7 (i)]{Boricaud_Buet25}). These weights are the ones used in the computations of \cref{sec:application,sec:numerical}.
We measure the accuracy of the empirical weight measure by
\begin{equation}
\label{eq:BB-carrier-estimate}
\varepsilon(\delta,N)
\coloneqq
\Expectation{d_{\textup{BL},\overline\Omega}(\measureMassEmpirical{\delta,N},\measure)},
\end{equation}
which also depends on $V$, $\theta$ and $\tau$.
\begin{proposition}[Rate of the empirical weight measure]
\label{prop:imported-scalar-carrier-theorem}
Under \cref{ass:kernel,ass:piecewise-regular-support,ass:sampling-density,ass:marked-iid-model}, let the weights be given by \cref{eq:weights-from-BB} with a fixed $\tau\in(0,\theta_-]$, write $\alpha\wedge\gamma\coloneqq\min\{\alpha,\gamma\}$ and set
\begin{equation}
\label{eq:BB-bandwidth-choice}
\begin{aligned}
\delta_N \coloneqq N^{-\nicefrac{1}{(d-1+2(\alpha\wedge\gamma))}},
&&
r \coloneqq \frac{\alpha\wedge\gamma}{d-1+2(\alpha\wedge\gamma)}.
\end{aligned}
\end{equation}
Then there exists a constant $C_0>0$ such that
\begin{equation}
\label{eq:BB-rate}
\begin{aligned}
\varepsilon(\delta_N,N)
\le
C_0 N^{-r}
&&
\text{for every } N\in\Z_{>0}.
\end{aligned}
\end{equation}
\end{proposition}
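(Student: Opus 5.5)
We prove \cref{eq:BB-rate} by comparing the corrected empirical measure $\measureMassEmpirical{\delta,N}=\frac1N\sum_i\Phi_\tau(\theta_{\delta,N}(X_i))\delta_{X_i}$ with the oracle measure $\frac1N\sum_i\theta(X_i)^{-1}\delta_{X_i}$. The oracle measure is an unbiased empirical approximation of $\theta^{-1}\mu_X=\measure$. Fix a test function $\varphi$ with $\norm{\varphi}_{\textup{BL}(\overline\Omega)}\le1$ and split
\[
\int\varphi\d(\measureMassEmpirical{\delta,N}-\measure)
=\frac1N\sum_{i=1}^N\varphi(X_i)\pab{\Phi_\tau(\theta_{\delta,N}(X_i))-\Phi_\tau(\theta(X_i))}
+\int\varphi\,\Phi_\tau(\theta)\d\pab{\tfrac1N\textstyle\sum_i\delta_{X_i}-\mu_X}.
\]
Here we use that $\Phi_\tau(\theta)=1/\theta$ $\measure$-a.e., because $\theta\ge\theta_-\ge\tau$, so that $\int\varphi\,\Phi_\tau(\theta)\d\mu_X=\int\varphi\d\measure$. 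The function $\Phi_\tau$ is Lipschitz on $[0,\infty)$ with constant $C/\tau^2$, since it vanishes below $\tau/2$. Hence the first term is bounded, uniformly in $\varphi$, by $C\tau^{-2}\frac1N\sum_i\abs{\theta_{\delta,N}(X_i)-\theta(X_i)}$. Its expectation equals $C\tau^{-2}\EE\abs{\theta_{\delta,N}(X_1)-\theta(X_1)}$ by exchangeability.

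\textbf{Density error.} Write $\theta_{\delta,N}(X_1)=\frac{\mlfr{}(0)}{NZ\delta^{d-1}}+\frac1{NZ\delta^{d-1}}\sum_{j\ne1}\mlfr{}((X_1-X_j)/\delta)$. The diagonal term contributes $C(N\delta^{d-1})^{-1}$. Conditionally on $X_1=x$, the sum is i.i.d.\ with mean $\frac{N-1}{N}\bar\theta_\delta(x)$, where $\bar\theta_\delta(x)\coloneqq(Z\delta^{d-1})^{-1}\int\mlfr{}((x-y)/\delta)\theta(y)\d\measure(y)$. Its variance is at most $C(N\delta^{d-1})^{-1}$, by \cref{eq:kernel-comparison} and the upper Ahlfors bound \cref{eq:ahlfors-regularity}. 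This gives $\EE\abs{\theta_{\delta,N}(X_1)-\bar\theta_\delta(X_1)}\le C(N\delta^{d-1})^{-1/2}+CN^{-1}$.

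\textbf{Bias, the main obstacle.} We must bound $\abs{\bar\theta_\delta(x)-\theta(x)}$. For $x\in\Gamma_l$ with $d_\Sigma(x)\ge R_0\delta/c_{\textup{s}}$, the ball $B_{R_0\delta}(x)$ meets only $\Gamma_l$ by \cref{eq:branch-separation-global}, and it misses $\partial\Gamma_l\subset\Sigma$. In it, $\Gamma_l$ is a $C^{1,\alpha}$ graph over $T_x\Gamma_l$ with slope $O(\abs{y-x}^\alpha)$. Changing variables to the tangent plane and using that $\mlfr{}$ is radial and $C^1$, the area factor, the normal displacement of size $O(\delta^{1+\alpha})$ inside $\mlfr{}$, and the Hölder oscillation of $\theta_l$ together give $\abs{\bar\theta_\delta(x)-\theta(x)}\le C\delta^{\alpha\wedge\gamma}$. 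The normalisation by $Z_{\mlfr{}}^{(d-1)}$ is exactly the hyperplane integral. Near $\Sigma$ we only have the crude bound $\abs{\bar\theta_\delta-\theta}\le C$. By \cref{eq:singular-neighbourhood-measure}, the set $\Set{d_\Sigma\le C\delta}$ has $\mu_X$-mass at most $C\delta$, so it contributes $C\delta\le C\delta^{\alpha\wedge\gamma}$. Uniformity in $x$ of the graph representation and of the constants comes from compactness of the finitely many branches, and this is where most of the technical care goes.

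\textbf{Empirical process term.} The second term is $\int g\d(\frac1N\sum_i\delta_{X_i}-\mu_X)$ with $g=\varphi\,\Phi_\tau(\theta)$. The function $g$ is bounded, but it is only piecewise Hölder, so a supremum over $\varphi$ needs a covering argument. Write $g=\varphi\cdot(1/\theta_l)$ on each branch. It then suffices to control, uniformly over $\norm{\varphi}_{\textup{BL}}\le1$, the empirical process indexed by the Lipschitz unit ball on the $(d-1)$-dimensional set $\Gamma$, weighted by the fixed bounded functions $\chi_{\Gamma_l\setminus\Sigma}/\theta_l$. Standard bracketing or metric-entropy bounds for Lipschitz classes on a $(d-1)$-Ahlfors-regular set, with covering numbers $\exp(C\epsilon^{-(d-1)})$, give an expected supremum of at most $CN^{-1/(d-1)}$ ($d\ge4$), $CN^{-1/2}\log N$ ($d=3$), or $CN^{-1/2}$ ($d=2$). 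For $d=3$, $N^{-1/2}\log N\le CN^{-r}$ holds since $r\le1/4<1/2$. For $d\ge4$, $1/(d-1)\ge r$ also holds, because $r=\frac{\alpha\wedge\gamma}{d-1+2(\alpha\wedge\gamma)}\le\frac{1}{d-1}$ whenever $\alpha\wedge\gamma\le1$. Each case is therefore dominated by $N^{-r}$.

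\textbf{Conclusion.} Taking the supremum over $\varphi$ and then the expectation yields
\[
\varepsilon(\delta,N)\le C\bigl(\delta^{\alpha\wedge\gamma}+(N\delta^{d-1})^{-1/2}+(N\delta^{d-1})^{-1}\bigr)+CN^{-r}.
\]
The choice \cref{eq:BB-bandwidth-choice} balances $\delta^{\alpha\wedge\gamma}=(N\delta^{d-1})^{-1/2}=N^{-r}$, and $(N\delta^{d-1})^{-1}=N^{-2r}\le N^{-r}$. This proves \cref{eq:BB-rate}.
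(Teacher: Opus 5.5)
Your proof is correct, and its skeleton is the paper's. The comparison with the oracle measure $\lambda_N^\circ=\frac1N\sum_i\theta(X_i)^{-1}\delta_{X_i}$, the Lipschitz bound on $\Phi_\tau$ with exchangeability, and the bias--variance analysis of $\theta_{\delta,N}(X_1)$ are exactly \cref{lem:density-bias,lem:density-at-sample} and the first half of the paper's argument.

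The genuine difference is how you control $\EE\, d_{\textup{BL},\overline\Omega}(\lambda_N^\circ,\measure)$. You invoke metric-entropy chaining for the Lipschitz unit ball on the $(d-1)$-dimensional set $\Gamma$, which gives the sharp rates $N^{-1/2}$, $N^{-1/2}\log N$ and $N^{-1/(d-1)}$. The paper instead proves the elementary single-scale bound of \cref{lem:weighted-empirical}. It replaces $\varphi$ by its values on cubes of side $\rho$, at cost $\sqrt d\,\rho$. It then bounds the fluctuations of the $\lesssim\rho^{-(d-1)}$ cell masses by Cauchy--Schwarz, at cost $\rho^{-(d-1)/2}N^{-1/2}$, and takes $\rho=N^{-1/(d+1)}$.

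This rate is worse than yours but is all that is needed, since $r\le 1/(d+1)$ with equality when $\alpha\wedge\gamma=1$. In both routes the bottleneck is the density estimator. The paper's route is self-contained and avoids citing covering numbers of Lipschitz classes and truncated chaining. Yours gives a sharper bound for a term whose sharpness is never used.

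Your detour through the branches and the H\"older regularity of $\theta_l$ is also unnecessary. Since $1/\theta\le 1/\theta_-$ is a fixed bounded weight, a sup-norm $\epsilon$-net of the Lipschitz ball multiplied by it is an $\epsilon/\theta_-$-net of your class. The paper's lemma likewise uses only that $g$ is bounded and Borel.
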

The proof is given in \cref{sec:proofs}.

\begin{remark}
The rate \cref{eq:BB-rate} is that of \citet[Theorem 7.4]{Boricaud_Buet25}, with the H\"older exponents $a=\alpha$, $b=\gamma$ of that paper and with its intrinsic dimension replaced by $d-1$.
That theorem concerns a variant in which the density $\theta_{\delta,N}$ is computed from a sample independent of the one carrying the weights.
For the estimator \cref{eq:corrected-carrier-measure} with a single sample, \citet[Theorem 4.9]{Boricaud_Buet25} bound the distance to a mollified measure, which together with their Proposition 6.12 gives a slower rate, and \cref{prop:imported-scalar-carrier-theorem} shows that the rate of Theorem 7.4 also holds for it.
\end{remark}

We now define the estimator of the visible perimeter, a discrete analogue of \cref{def:coherent-perimeter-functional} built from the same sample.

\begin{definition}[Empirical visible perimeter]
\label{def:empirical-coherent-perimeter-functional}
For $\sigma > 0$ and $x\in\Omega$, mollifying the empirical weight measure and current as in \cref{def:coherent-perimeter-functional} gives
\[
\begin{aligned}
\pab{\mlfr{\sigma} \ast \measureMassEmpirical{\delta,N}}(x)
= \frac{1}{N}\sum_{i=1}^N W_i^{\delta,N} \mlfr{\sigma}(x-X_i),
&&
\pab{\mlfr{\sigma} \ast \measureVectorEmpirical{\delta,N}}(x)
= \frac{1}{N}\sum_{i=1}^N W_i^{\delta,N} U_i \mlfr{\sigma}(x-X_i).
\end{aligned}
\]
Then we define the empirical visibility by
\begin{equation}
\label{eq:discrete-visibility-density}
q_{\sigma,\delta,N}(x)
\coloneqq
\begin{dcases}
\frac{\abs{\pab{\mlfr{\sigma} \ast \measureVectorEmpirical{\delta,N}}(x)}}{\pab{\mlfr{\sigma} \ast \measureMassEmpirical{\delta,N}}(x)},& \text{if } \pab{\mlfr{\sigma} \ast \measureMassEmpirical{\delta,N}}(x) > 0,\\
0 & \text{otherwise}.
\end{dcases}
\end{equation}
The empirical visible perimeter is
\begin{equation}
\label{eq:discrete-perimeter-estimator}
\widehat P_{\sigma,\delta,N}
\coloneqq
\int_\Omega q_{\sigma,\delta,N}(x) \d \measureMassEmpirical{\delta,N}(x)
=
\frac{1}{N} \sum_{i=1}^N W_i^{\delta,N}
\frac{\abs{\pab{\mlfr{\sigma} \ast \measureVectorEmpirical{\delta,N}}(X_i)}}{\pab{\mlfr{\sigma} \ast \measureMassEmpirical{\delta,N}}(X_i)}.
\end{equation}
Since $\mlfr{\sigma}(0) > 0$ and $W_i^{\delta,N} \ge 0$, every denominator in \cref{eq:discrete-perimeter-estimator} is strictly positive whenever $W_i^{\delta,N} > 0$.
\end{definition}

\section{Convergence of the empirical visible perimeter}
\label{sec:mathematical-analysis}

We establish a quantitative consistency result between the perimeter $P(V;\Omega)$ of \cref{def:perimeter-for-oriented-varifolds} and the empirical visible perimeter $\widehat P_{\sigma,\delta,N}$ as $\sigma\to 0$, $\delta\to 0$ and $N\to \infty$, under the assumptions of \cref{sec:estimation}.

\begin{theorem}[Quantitative perimeter consistency]
\label{thm:quantitative-perimeter-consistency}
Under \cref{ass:kernel,ass:piecewise-regular-support,ass:piecewise-holder-mV,ass:sampling-density,ass:marked-iid-model}, there exists a constant $C > 0$ such that, for every $\sigma \in (0,\sigma_0]$ (with $\sigma_0$ as in \cref{eq:sigma-zero}), every $\delta > 0$, and every $N \in \Z_{>0}$,
\begin{equation}
\label{eq:quantitative-perimeter-bound}
\Expectation{\abs{\widehat P_{\sigma,\delta,N} - P(V;\Omega)}}
\le
C \pab{
\sigma^\beta
+
\sigma^{-d} \varepsilon(\delta,N)
+
\frac{1}{N\sigma^{d-1}}
+
\frac{1}{\sqrt{N\sigma^{d-1}}}
}.
\end{equation}
\end{theorem}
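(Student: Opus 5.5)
We split the error with the triangle inequality into a deterministic bias and a stochastic part:
\[
\abs{\widehat P_{\sigma,\delta,N}-P(V;\Omega)}\le\abs{P_\sigma(V)-P(V;\Omega)}+\abs{\widehat P_{\sigma,\delta,N}-P_\sigma(V)}.
\]
For the bias we write $P(V;\Omega)=\int_\Gamma\abs{\fmd}\d\measure$ by \cref{eq:T_V-is-mV-lambdaV} and compare pointwise $q_\sigma[V](x)$ with $\abs{\fmd(x)}$ for $x\in\Gamma_l\setminus\Sigma$. Since $(\mlfr{\sigma}\ast\measureVector)(x)-\fmd(x)(\mlfr{\sigma}\ast\measure)(x)=\int\mlfr{\sigma}(x-y)(\fmd(y)-\fmd(x))\d\measure(y)$, dividing by \cref{eq:U-sigma-comparable} gives
\[
\abs{q_\sigma[V](x)-\abs{\fmd(x)}}\le\frac{\int\mlfr{\sigma}(x-y)\abs{\fmd(y)-\fmd(x)}\d\measure(y)}{(\mlfr{\sigma}\ast\measure)(x)}.
\]
If $\dist(x,\Sigma)\ge R_0\sigma/c_{\textup{s}}$, then by \cref{eq:branch-separation-global} and the choice of $\sigma_0$ the ball $B_{R_0\sigma}(x)$ meets only $\Gamma_l$. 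Hölder continuity of $\fmdbranch{l}$ then bounds the numerator by $C\sigma^\beta(\mlfr{\sigma}\ast\measure)(x)$. Near $\Sigma$ we only use the trivial bound $1$, and by \cref{eq:singular-neighbourhood-measure} this region has mass $\le C\sigma$. Since $\beta\le 1$ and $\sigma\le1$, this yields the bias bound $C\sigma^\beta$.

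For the stochastic part we write $A(x)=(\mlfr{\sigma}\ast\measureVector)(x)$ and $B(x)=(\mlfr{\sigma}\ast\measure)(x)$, with empirical versions $A_N,B_N$, so that
\[
\widehat P-P_\sigma=\int\frac{\abs{A_N}}{B_N}\d\measureMassEmpirical{\delta,N}-\int\frac{\abs{A}}{B}\d\measure.
\]
We insert intermediate terms in three steps.

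First, we replace $\measureMassEmpirical{\delta,N}$ by $\measure$ in the outer integral, applied to the fixed function $\abs{A}/B$. This function is bounded by $1$. On the $R_0\sigma$-neighbourhood of $\Gamma$ its Lipschitz constant is $\le C\sigma^{-1}$, since $\Lip(\mlfr{\sigma})\le C\sigma^{-d-1}$, $\measure(B_{R_0\sigma})\le C\sigma^{d-1}$ and $B\gtrsim\sigma^{-1}$. This step therefore costs $C\sigma^{-1}\varepsilon(\delta,N)$. A little care is needed because $B$ may degenerate away from $\Gamma$, so we restrict to a compact neighbourhood $K$ of $\Gamma$ where \cref{eq:U-sigma-comparable} still holds up to constants and use the extension remark after \cref{def:bounded-lipschitz-distance}.

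Second, we replace $B_N$ by $B$ at the sample points. For fixed $x$, the difference $B_N(x)-B(x)$ is a BL pairing against $\mlfr{\sigma}(x-\cdot)$, whose BL norm is $\le C\sigma^{-d-1}$. A crude bound therefore gives $\sigma^{-d-1}\varepsilon$. To reach the claimed $\sigma^{-d}\varepsilon$ we would normalize by $B\sim\sigma^{-1}$, since the relative error $\abs{B_N-B}/B$ is what enters, and use $\abs{A_N}\le B_N$ so that $\abs{\abs{A_N}/B_N-\abs{A_N}/B}\le\abs{B_N-B}/B$. Because the supremum over $x$ sits inside the expectation, we would take the supremum over the family $\{\sigma^{d+1}\mlfr{\sigma}(x-\cdot)\}_x$, which is contained in the BL unit ball up to a constant. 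Hence $\EE\sup_x\abs{B_N-B}/B\le C\sigma^{-d}\varepsilon$.

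Third, we compare $A_N$ with $A$. Write $\tilde A_N(x)=\frac1N\sum_i W_i\fmd(X_i)\mlfr{\sigma}(x-X_i)$, the conditional mean given positions by \cref{eq:conditional-mean-of-marks}, using that the $W_i$ depend only on the positions.
\begin{itemize}
\item $\tilde A_N$ versus $A$: this is handled like $B$, except that $\fmd$ is only piecewise Hölder. Using $\fmd(x)$ frozen plus the Hölder and singular-set argument of the bias step again gives $\sigma^\beta+\sigma^{-d}\varepsilon$.
\item $A_N$ versus $\tilde A_N$: conditionally on positions, $A_N(X_j)-\tilde A_N(X_j)$ is a sum of independent centred vectors. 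The $i=j$ term is bounded by $\mlfr{\sigma}(0)/(N\tau)\le C/(N\sigma^d)$, which after dividing by $B\gtrsim\sigma^{-1}$ gives $1/(N\sigma^{d-1})$. The remaining terms have conditional variance $\le\frac{C}{N^2}\sum_i\mlfr{\sigma}(X_j-X_i)^2\le \frac{C\sigma^{-d}}{N}B_N(X_j)$.
\end{itemize}
By Cauchy--Schwarz, the relative error is then of order $(N\sigma^{d}B)^{-1/2}\sim(N\sigma^{d-1})^{-1/2}$, using $B_N\approx B$. The residual $B_N$ versus $B$ discrepancy inside the variance is absorbed into the previous terms.

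The main obstacle is the second and third steps: controlling the supremum over evaluation points and the self-dependence, since $\measureMassEmpirical{\delta,N}$ and the kernels are evaluated at the same random points and the weights depend on all samples. If the uniform BL-family argument fails, we would fall back on averaging over $j$ with the outer measure and pairing against $\measure$ via the first step.
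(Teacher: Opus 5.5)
Your proposal is correct and is essentially the paper's proof. You use the same split into the bias $P_\sigma(V)-P(V;\Omega)$, the swap $\measureMassEmpirical{\delta,N}\to\measure$ against $q_\sigma[V]$ via $\Lip(q_\sigma[V]|_\Gamma)\le C\sigma^{-1}$, and the $q_N-q$ term, which you control with the same ingredients as the paper:
\begin{itemize}
\item the decomposition of $A_N$ into the mark fluctuation plus $\mlfr{\sigma}\ast(\fmd\measureMassEmpirical{\delta,N})$;
\item freezing $\fmd(x)$ on the regular set;
\item $\sup_x\abs{B_N-B}\le C\sigma^{-(d+1)}d_{\textup{BL},\overline\Omega}(\measureMassEmpirical{\delta,N},\measure)$;
\item $B\ge c_\mu\sigma^{-1}$ and a conditional-variance bound.
\end{itemize}
The only cosmetic difference is that you replace $B_N$ by $B$ via $\abs{A_N}\le B_N$ instead of the paper's $\min\{1,\abs{M}/u_N\}$ device.

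The obstacle you flag at the end is not one. The supremum bound over $x$ holds pathwise by the definition of $d_{\textup{BL}}$, and the weights depend only on the positions, so conditioning on $X_1,\dots,X_N$ makes the marks independent and centred. The one detail to add is $\EE\,\measureMassEmpirical{\delta,N}(\Gamma_\sigma^{\textup{sing}})\le(\theta_+/\tau)\measure(\Gamma_\sigma^{\textup{sing}})\le C\sigma$, which you need for the singular set under the empirical outer measure.
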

For suitable sequences $\delta_N,\sigma_N\downarrow0$, this gives an explicit rate of decay of the expected error.

\begin{corollary}
\label{cor:explicit-rate-with-BB-bandwidth}
Under the assumptions of \cref{thm:quantitative-perimeter-consistency}, let $\delta_N$ and $r$ be as in \cref{eq:BB-bandwidth-choice}, and set $\sigma_N\coloneqq\Lambda N^{-\nicefrac{r}{(d+\beta)}}$ with a constant $\Lambda\ge1$.
Then there exists a constant $C > 0$ such that, for every $N\in\Z_{>0}$,
\begin{equation}
    \label{eq:coherent-final-estimate-N}
\Expectation{\abs{\widehat P_{\sigma_N,\delta_N,N} - P(V;\Omega)}}
\le
C N^{-\nicefrac{\beta r}{(d+\beta)}}.
\end{equation}
\end{corollary}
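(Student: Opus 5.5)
The plan is to substitute the choices $\delta_N$ and $\sigma_N$ into the bound \cref{eq:quantitative-perimeter-bound} of \cref{thm:quantitative-perimeter-consistency} and to check that each of the four terms is $O(N^{-\beta r/(d+\beta)})$.

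One caveat comes first. The theorem requires $\sigma\le\sigma_0$, but $\sigma_N=\Lambda N^{-r/(d+\beta)}$ satisfies this only for $N\ge N_0\coloneqq\lceil(\Lambda/\sigma_0)^{(d+\beta)/r}\rceil$. For the finitely many $N<N_0$ I will use a crude bound. Since $0\le q_{\sigma,\delta,N}\le1$ by the triangle inequality, and $0\le W_i^{\delta,N}\le1/\tau$ by \cref{eq:weights-bounded}, we get $0\le\widehat P_{\sigma,\delta,N}\le1/\tau$. Also $P(V;\Omega)\le\measure(\Omega)=\Haus{d-1}(\Gamma)<\infty$. Hence for $N<N_0$ the expected error is at most $1/\tau+\measure(\Omega)\le(1/\tau+\measure(\Omega))N_0^{\beta r/(d+\beta)}N^{-\beta r/(d+\beta)}$. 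The constant here depends on $\Lambda$, which I treat as a fixed parameter absorbed into $C$.

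For $N\ge N_0$ I will treat the four terms in turn.

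\emph{First term.} $\sigma_N^\beta=\Lambda^\beta N^{-\beta r/(d+\beta)}$, which already has the target rate.

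\emph{Second term.} By \cref{prop:imported-scalar-carrier-theorem}, $\varepsilon(\delta_N,N)\le C_0N^{-r}$. Therefore $\sigma_N^{-d}\varepsilon(\delta_N,N)\le C_0\Lambda^{-d}N^{dr/(d+\beta)-r}=C_0\Lambda^{-d}N^{-\beta r/(d+\beta)}$. This is exactly the balance for which the exponent $r/(d+\beta)$ was chosen.

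\emph{Statistical terms.} Since $\Lambda\ge1$, we have $N\sigma_N^{d-1}\ge N^{1-(d-1)r/(d+\beta)}$. The key point is that $r\le\frac{1}{d+1}<\frac12$, because $\alpha\wedge\gamma\le1$ and $t\mapsto t/(d-1+2t)$ is increasing. So I need
\[
\tfrac12\Bigl(1-\tfrac{(d-1)r}{d+\beta}\Bigr)\ge\tfrac{\beta r}{d+\beta},
\]
that is, $d+\beta\ge(d-1+2\beta)r$. This holds because $(d-1+2\beta)r\le(d+1)/(d+1)=1<d+\beta$. Consequently $(N\sigma_N^{d-1})^{-1/2}\le N^{-\beta r/(d+\beta)}$. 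Moreover $N\sigma_N^{d-1}\ge1$, so $(N\sigma_N^{d-1})^{-1}\le(N\sigma_N^{d-1})^{-1/2}$ and this term is dominated as well.

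Summing the four bounds and combining with the small-$N$ case gives \cref{eq:coherent-final-estimate-N}. There is no real obstacle beyond bookkeeping. The only step needing care is the exponent inequality for the statistical terms, which relies on $r<1/2$.
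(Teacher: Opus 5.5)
Your proof is correct and follows the paper's argument step for step. The steps are the crude bound $1/\tau+|\mu|(\Omega)$ for the finitely many $N$ with $\sigma_N>\sigma_0$, then substitution into the theorem with the balance $dr/(d+\beta)-r=-\beta r/(d+\beta)$ for the second term, and the condition $(d-1+2\beta)r\le d+\beta$ for the statistical terms; the only point left implicit, $N\sigma_N^{d-1}\ge1$, follows from your lower bound $N\sigma_N^{d-1}\ge N^{1-(d-1)r/(d+\beta)}$ because that exponent is positive.
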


The proof of \cref{thm:quantitative-perimeter-consistency} rests on two lemmas.
The first one is deterministic. Away from $\Sigma$, the kernel $\mlfr{\sigma}(x-\cdot)$ meets one branch only, on which $\fmd$ is H\"older continuous, so that the mollification of a measure with the density $\fmd$ is close to $\fmd(x)$ times the mollification of the measure.
The second one controls the fluctuation of the sampled orientations $U_i$ about their conditional means $\fmd(X_i)$.
The theorem and the corollary are proved at the end of this section.

\begin{lemma}[Comparison away from $\Sigma$]
\label{lem:continuous-quantitative-consistency}
Under \cref{ass:kernel,ass:piecewise-regular-support,ass:piecewise-holder-mV}, let $\sigma \in (0,\sigma_0]$ with $\sigma_0$ as in \cref{eq:sigma-zero}, let $\overline R \coloneqq R_0/c_{\textup{s}}$, and set
\(
\Gamma_\sigma^{\textup{sing}} \coloneqq \Set{ x \in \Gamma | \dist(x,\Sigma) \le \overline R \sigma }\),
\(\Gamma_\sigma^{\textup{reg}} \coloneqq \Gamma \setminus \Gamma_\sigma^{\textup{sing}}\).
Then there exists a constant $C > 0$ such that the following holds.
\begin{enumerate}[label=\textup{(\roman*)},ref=(\roman*)]
\item\label{item:comparison-general-measure} For every $x\in\Gamma_\sigma^{\textup{reg}}$ and every finite non-negative Radon measure $\nu$ on $\Gamma$ with $\nu(\Sigma)=0$,
\begin{equation}
\label{eq:holder-control-inside-kernel}
\abs{(\mlfr{\sigma}\ast(\fmd\nu))(x) - \fmd(x) (\mlfr{\sigma}\ast\nu)(x)}
\le
C \sigma^\beta (\mlfr{\sigma}\ast\nu)(x).
\end{equation}
\item\label{item:comparison-visibility} The visibility satisfies $\abs{q_\sigma[V](x) - \abs{\fmd(x)}} \le C \sigma^\beta$ for every $x \in \Gamma_\sigma^{\textup{reg}}$, and the singular part satisfies $\measure(\Gamma_\sigma^{\textup{sing}}) \le C \sigma$.
\item\label{item:comparison-perimeter} The visible perimeter satisfies \(\abs{P_\sigma(V) - P(V;\Omega)} \le C \sigma^\beta\).
\end{enumerate}
\end{lemma}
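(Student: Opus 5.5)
The plan is to prove the three items in order, each resting on the previous one. Throughout, $\sigma\le\sigma_0$ and $x\in\Gamma_\sigma^{\textup{reg}}$.

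For \cref{item:comparison-general-measure}, the first step is to show that the support $B_{R_0\sigma}(x)$ of $\mlfr{\sigma}(x-\cdot)$ meets only the branch $\Gamma_l$ that contains $x$. Since $\dist(x,\Sigma)>\overline R\sigma=R_0\sigma/c_{\textup{s}}$ and $R_0\sigma/c_{\textup{s}}\le r_{\textup{sep}}$ by \cref{eq:sigma-zero}, we have $d_\Sigma(x)\ge\min\{R_0\sigma/c_{\textup{s}},r_{\textup{sep}}\}=R_0\sigma/c_{\textup{s}}$. Then \cref{eq:branch-separation-global} gives $\dist(x,\Gamma_{l'})\ge R_0\sigma$ for every $l'\ne l$. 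The point $x$ is in $\Gamma_l\setminus\Sigma$, so $l$ is unique. Because $\nu(\Sigma)=0$, on the kernel support we may write $\fmd=\fmdbranch{l}$ $\nu$-a.e. This yields
\[
(\mlfr{\sigma}\ast(\fmd\nu))(x)-\fmd(x)(\mlfr{\sigma}\ast\nu)(x)=\int \mlfr{\sigma}(x-y)\pab{\fmdbranch{l}(y)-\fmdbranch{l}(x)}\d\nu(y).
\]
To bound this we use the Hölder continuity $\abs{\fmdbranch{l}(y)-\fmdbranch{l}(x)}\le[\fmdbranch{l}]_\beta\abs{x-y}^\beta\le[\fmdbranch{l}]_\beta(R_0\sigma)^\beta$ together with $\mlfr{\sigma}\ge0$. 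One point needs care: the Hölder seminorm is intrinsic to $\Gamma_l$ with respect to the Euclidean distance, which is what $C^{0,\beta}(\Gamma_l)$ means, so no geodesic comparison is needed. The constant $C$ is the maximum over $l$ of $[\fmdbranch{l}]_\beta R_0^\beta$.

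For \cref{item:comparison-visibility}, apply \cref{item:comparison-general-measure} with $\nu=\measure$; this is allowed because $\measure(\Sigma)=0$ by \cref{lem:ahlfors-regularity}. Using $\measureVector=\fmd\measure$ from \cref{eq:T_V-is-mV-lambdaV} and dividing by $(\mlfr{\sigma}\ast\measure)(x)$, which is positive by \cref{eq:U-sigma-comparable}, we get $\abs{(\mlfr{\sigma}\ast\measureVector)(x)/(\mlfr{\sigma}\ast\measure)(x)-\fmd(x)}\le C\sigma^\beta$. The reverse triangle inequality then gives $\abs{q_\sigma[V](x)-\abs{\fmd(x)}}\le C\sigma^\beta$. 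For the singular part, note that $\overline R\sigma\le r_\Gamma$ follows from \cref{eq:sigma-zero}, since $R_0\sigma/c_{\textup{s}}\le\min\{r_\Gamma,r_{\textup{sep}}\}$. So \cref{eq:singular-neighbourhood-measure} applied with $r=\overline R\sigma$ gives $\measure(\Gamma_\sigma^{\textup{sing}})\le C_\Sigma\overline R\sigma$.

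For \cref{item:comparison-perimeter}, write $P(V;\Omega)=\abs{\measureVector}(\Omega)=\int_\Gamma\abs{\fmd}\d\measure$, which holds because $\measureVector=\fmd\measure$. Split $\abs{P_\sigma(V)-P(V;\Omega)}\le\int_\Gamma\abs{q_\sigma[V]-\abs{\fmd}}\d\measure$ over $\Gamma_\sigma^{\textup{reg}}$ and $\Gamma_\sigma^{\textup{sing}}$.
\begin{itemize}
\item On the regular part the integrand is at most $C\sigma^\beta$, so this piece contributes at most $C\sigma^\beta\measure(\Gamma)$.
\item On the singular part the integrand is at most $1$, since $0\le q_\sigma[V]\le1$ and $\abs{\fmd}\le1$, so this piece contributes at most $C\sigma$.
\end{itemize}
Since $\sigma\le1$ and $\beta\le1$, we have $\sigma\le\sigma^\beta$, and the total is at most $C\sigma^\beta$.

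The only delicate step is the branch-isolation argument in \cref{item:comparison-general-measure}. It requires checking that the truncation in $d_\Sigma$ at $r_{\textup{sep}}$ does not destroy the bound, and this is exactly what the third entry of $\sigma_0$ in \cref{eq:sigma-zero} guarantees.
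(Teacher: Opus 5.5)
Your proposal is correct and follows the paper's proof essentially step by step. You isolate the branch $\Gamma_l$ inside $B_{R_0\sigma}(x)$ using the third entry of $\sigma_0$ and the separation condition, apply the H\"older bound under the non-negative kernel, divide by $(\mlfr{\sigma}\ast\measure)(x)$ and use the reverse triangle inequality, and split the perimeter integral into its regular and singular parts. The only cosmetic difference is that you invoke $\nu(\Sigma)=0$, whereas the paper observes directly that $B_{R_0\sigma}(x)$ misses $\Sigma$ because $\dist(x,\Sigma)>\overline R\sigma\ge R_0\sigma$.
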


\begin{proof}
Let $x \in \Gamma_\sigma^{\textup{reg}}$ belong to the branch $\Gamma_l$.
Then $d_\Sigma(x)=\min\{\dist(x,\Sigma),r_{\textup{sep}}\}\ge\min\{\overline R\sigma,r_{\textup{sep}}\}= R_0\sigma/c_{\textup{s}}$, because $\sigma\le c_{\textup{s}}r_{\textup{sep}}/R_0$ by \cref{eq:sigma-zero}. Hence $\dist(x,\Gamma_{l'})\ge c_{\textup{s}}d_\Sigma(x)\ge R_0\sigma$ for every $l'\ne l$ by \cref{eq:branch-separation-global}, and $\dist(x,\Sigma)>R_0\sigma$. The ball $B_{R_0\sigma}(x)$, outside of which $\mlfr{\sigma}(x-\cdot)$ vanishes, therefore meets $\Gamma$ only in $\Gamma_l\setminus\Sigma$, where $\fmd=\fmdbranch{l }$ and $\abs{\fmd(y)-\fmd(x)}\le C\abs{y-x}^\beta\le C\sigma^\beta$.
Since $\nu\ge0$, this gives
\[
\abs{(\mlfr{\sigma}\ast(\fmd\nu))(x) - \fmd(x) (\mlfr{\sigma}\ast\nu)(x)}
=
\abs{\int_\Gamma \mlfr{\sigma}(x-y) ( \fmd(y)-\fmd(x) ) \d \nu(y)}
\le
C \sigma^\beta (\mlfr{\sigma}\ast\nu)(x),
\]
which is \ref{item:comparison-general-measure}.

For \ref{item:comparison-visibility}, we take $\nu=\measure$, which satisfies $\measure(\Sigma)=0$ by \cref{lem:ahlfors-regularity} and $\fmd\measure=\measureVector$ by \cref{eq:T_V-is-mV-lambdaV}. Dividing \cref{eq:holder-control-inside-kernel} by $(\mlfr{\sigma}\ast\measure)(x)$, which is positive by \cref{eq:U-sigma-comparable}, and using $\abs{\abs{a}-\abs{b}}\le \abs{a - b}$ for $a,b\in\R^d$, we obtain the first claim. The second one follows from \cref{eq:singular-neighbourhood-measure}, since $\overline R\sigma\le r_\Gamma$ by \cref{eq:sigma-zero}.

Finally, $P(V;\Omega)=\abs{\measureVector}(\Omega)=\int_\Gamma\abs{\fmd}\d\measure$ by \cref{eq:T_V-is-mV-lambdaV}, and $0\le q_\sigma[V],\abs{\fmd}\le1$, so that \ref{item:comparison-visibility} gives
\[
\abs{P_\sigma(V) - P(V;\Omega)}
\le
\int_{\Gamma} \abs{q_\sigma[V] - \abs{\fmd}} \d \measure
\le
C \sigma^\beta\measure(\Gamma) + \measure(\Gamma_\sigma^{\textup{sing}})
\le
C \sigma^\beta,
\]
where we used $\sigma\le1$ and $\beta\le1$.
\end{proof}

\begin{lemma}[Fluctuation of the orientations]
\label{lem:mark-fluctuation-bound}
Under \cref{ass:kernel,ass:piecewise-regular-support,ass:sampling-density,ass:marked-iid-model}, with the weights \cref{eq:weights-from-BB}, define
\(
M_{\sigma,\delta,N}(x)
\coloneqq
(\mlfr{\sigma} \ast (\measureVectorEmpirical{\delta,N} - \fmd\measureMassEmpirical{\delta,N}))(x)\).
Then there exists a constant $C > 0$ such that
\begin{equation}
\label{eq:mark-fluctuation-average-bound}
\Expectation{
\frac{1}{N} \sum_{i=1}^N W_i^{\delta,N} \abs{M_{\sigma,\delta,N}(X_i)}
}
\le
C\pab{\frac{1}{N\sigma^d} + \frac{1}{\sqrt{N\sigma^{d+1}}}}
\end{equation}
holds for every $\sigma \in (0,\sigma_0]$, every $\delta>0$ and every $N\in\Z_{>0}$, with $\sigma_0$ as in \cref{eq:sigma-zero}.
\end{lemma}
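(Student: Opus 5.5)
We write $W_i\coloneqq W_i^{\delta,N}$ and $\xi_j\coloneqq U_j-\fmd(X_j)$. The plan is to condition on the positions $X_1,\dots,X_N$ and use that the weights $W_i$ depend only on the positions. Then, conditionally on the positions, the $\xi_j$ are independent, centred by \cref{eq:conditional-mean-of-marks}, and bounded by $\abs{\xi_j}\le2$. We have
\[
M_{\sigma,\delta,N}(X_i)=\frac1N\sum_{j=1}^N W_j\,\mlfr{\sigma}(X_i-X_j)\,\xi_j .
\]
We split off the diagonal term $j=i$. It is bounded by $\frac{2}{N\tau}\mlfr{\sigma}(0)\le \frac{C}{N\sigma^d}$ by \cref{eq:weights-bounded}. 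Averaging with the weights $W_i/N\le 1/(N\tau)$ over $i$ contributes at most $C/(N\sigma^d)$, which is the first term of \cref{eq:mark-fluctuation-average-bound}.

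For the off-diagonal part $M_i'\coloneqq\frac1N\sum_{j\ne i}W_j\mlfr{\sigma}(X_i-X_j)\xi_j$, we use Cauchy--Schwarz in the conditional expectation. Conditionally on the positions, $\xi_j$ for $j\neq i$ is independent of everything else appearing in $M_i'$, so
\[
\ExpectationSample{\abs{M_i'}^2}=\frac1{N^2}\sum_{j\ne i}W_j^2\mlfr{\sigma}(X_i-X_j)^2\ExpectationSample{\abs{\xi_j}^2}\le\frac{C}{N^2\tau^2}\sum_{j\ne i}\mlfr{\sigma}(X_i-X_j)^2 .
\]
Since $\mlfr{\sigma}^2\le C_{\mlfr{}}\sigma^{-d}\mlfr{\sigma}$, this is at most $\frac{C}{N\sigma^d}\,S_i$, where $S_i\coloneqq\frac1N\sum_{j\ne i}\mlfr{\sigma}(X_i-X_j)$. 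Therefore
\[
\EE\Bab{\tfrac1N\textstyle\sum_i W_i\abs{M_i'}}\le\frac{1}{\tau N}\sum_i\EE\bab{(\tfrac{C}{N\sigma^d}S_i)^{1/2}}\le \frac{C}{\sqrt{N\sigma^d}}\max_i\pab{\EE S_i}^{1/2},
\]
where the last step uses Jensen's inequality.

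It remains to bound $\EE S_i$. By independence of $X_i$ and $X_j$ for $j\ne i$,
\[
\EE S_i\le\int\!\!\int\mlfr{\sigma}(x-y)\d\mu_X(y)\d\mu_X(x)\le\theta_+\sup_{x\in\Gamma}(\mlfr{\sigma}\ast\measure)(x).
\]
This supremum is at most $C_{\mlfr{}}\sigma^{-d}\measure(B_{R_0\sigma}(x))\le C\sigma^{-1}$ by \cref{eq:kernel-comparison} and the upper Ahlfors bound \cref{eq:ahlfors-regularity}, valid since $R_0\sigma\le r_\Gamma$ for $\sigma\le\sigma_0$. Substituting gives $\frac{C}{\sqrt{N\sigma^d}}\sigma^{-1/2}=\frac{C}{\sqrt{N\sigma^{d+1}}}$, the second term.

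The main obstacle is the dependence between the weights and the samples: each $W_i$ involves all positions. This is handled entirely by conditioning on the positions, because the $W_i$ involve no orientations. The only remaining subtlety is that $X_i$ appears both in the kernel centre and in the sum. This is why we separate the diagonal term, so that $\EE S_i$ can be computed as a double integral against $\mu_X\otimes\mu_X$.
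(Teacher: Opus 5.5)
Your proof is correct and follows essentially the same route as the paper. Both arguments condition on the positions, use that the weights are functions of the positions alone and that the marks are conditionally independent and centred (so the cross terms vanish in the conditional second moment), and bound the expected squared kernel over independent pairs by the upper Ahlfors bound. The differences are only cosmetic: you remove the diagonal term $j=i$ by the triangle inequality and use $\psi_\sigma^2\le C_{\psi}\sigma^{-d}\psi_\sigma$ before Jensen, whereas the paper keeps the diagonal inside the conditional variance (as the $\psi_\sigma(0)^2$ term), applies Cauchy--Schwarz to $\mathbb{E}|M_{\sigma,\delta,N}(X_1)|^2$ after exchangeability, and bounds $\mathbb{E}[\psi_\sigma(X_1-X_2)^2\mid X_1]$ directly.
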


\begin{proof}
By definition, $M_{\sigma,\delta,N}(x)=\frac1N\sum_{j=1}^NW_j^{\delta,N}\mlfr{\sigma}(x-X_j)\pab{U_j-\fmd(X_j)}$.
Given the positions $X_1,\dots,X_N$, the weights are deterministic, the orientations $U_j$ are independent, and the differences $U_j-\fmd(X_j)$ are centred by \cref{eq:conditional-mean-of-marks} and bounded by $2$.
Hence the cross terms vanish in the conditional second moment, and \cref{eq:weights-bounded} gives, for every $i\in\{1,\dots,N\}$,
\[
\begin{aligned}
\ExpectationSample{\abs{M_{\sigma,\delta,N}(X_i)}^2}
&=
\frac{1}{N^2}\sum_{j=1}^N\pab{W_j^{\delta,N}}^2\mlfr{\sigma}(X_i-X_j)^2\,\ExpectationSample{\abs{U_j-\fmd(X_j)}^2}\\
&\le
\frac{4}{\tau^2N^2}\sum_{j=1}^N\mlfr{\sigma}(X_i-X_j)^2.
\end{aligned}
\]
By \cref{eq:weights-bounded}, the exchangeability of the samples and the Cauchy--Schwarz inequality,
\[
\begin{aligned}
\Expectation{\frac{1}{N} \sum_{i=1}^N W_i^{\delta,N} \abs{M_{\sigma,\delta,N}(X_i)}}
&\le
\frac{1}{\tau}\pab{\Expectation{\abs{M_{\sigma,\delta,N}(X_1)}^2}}^{\nicefrac12}\\
&\le
\frac{2}{\tau^2N}\pab{\mlfr{\sigma}(0)^2+(N-1)\Expectation{\mlfr{\sigma}(X_1-X_2)^2}}^{\nicefrac12}.
\end{aligned}
\]
Here $\mlfr{\sigma}(0)^2\le C\sigma^{-2d}$ by \cref{eq:kernel-rescaling}, and \cref{eq:sampling-density-bounds,eq:ahlfors-regularity} (applicable since $R_0\sigma\le r_\Gamma$) give
\(
\ExpectationGiven{\mlfr{\sigma}(X_1-X_2)^2}{X_1}
\le
\theta_+\norm{\mlfr{}}_\infty^2\sigma^{-2d}\measure(B_{R_0\sigma}(X_1))
\le
C\sigma^{-(d+1)}
\).
This proves \cref{eq:mark-fluctuation-average-bound}.
\end{proof}

\begin{proof}[Proof of \cref{thm:quantitative-perimeter-consistency}]
Fix $\sigma \in (0,\sigma_0]$, $\delta>0$ and $N\in\Z_{>0}$, and write
\[
\begin{aligned}
u_N &\coloneqq \mlfr{\sigma} \ast \measureMassEmpirical{\delta,N}, &
j_N &\coloneqq \mlfr{\sigma} \ast \measureVectorEmpirical{\delta,N}, &
q_N &\coloneqq q_{\sigma,\delta,N},\\
u &\coloneqq \mlfr{\sigma} \ast \measure, &
j &\coloneqq \mlfr{\sigma} \ast \measureVector, &
q &\coloneqq q_\sigma[V].
\end{aligned}
\]
Since $\widehat P_{\sigma,\delta,N}=\int_\Omega q_N \d\measureMassEmpirical{\delta,N}$ and $P_\sigma(V)=\int_\Omega q\d\measure$, we have
\begin{equation}
\label{eq:error-decomposition}
\widehat P_{\sigma,\delta,N} - P(V;\Omega)
=
\int_\Omega (q_N-q) \d \measureMassEmpirical{\delta,N}
+
\int_\Omega q \d (\measureMassEmpirical{\delta,N}-\measure)
+
\pab{P_\sigma(V)-P(V;\Omega)}.
\end{equation}
The last term is at most $C\sigma^\beta$ by \cref{lem:continuous-quantitative-consistency}. We estimate the expectations of the first two terms.

\paragraph{The first term.}
Almost surely, the samples belong to $\Gamma\setminus\Sigma$, since $\mu_X\ll\measure$ and $\measure(\Sigma)=0$ by \cref{lem:ahlfors-regularity}, so that $\measureMassEmpirical{\delta,N}$ is a measure on $\Gamma$ with $\measureMassEmpirical{\delta,N}(\Sigma)=0$.
Let $x=X_i$ with $W_i^{\delta,N}>0$, so that $u_N(x)>0$ and $q_N(x)=\abs{j_N(x)}/u_N(x)$.
If $x\in\Gamma_\sigma^{\textup{reg}}$, then $j_N=M_{\sigma,\delta,N}+\mlfr{\sigma}\ast(\fmd\measureMassEmpirical{\delta,N})$ and \cref{eq:holder-control-inside-kernel} with $\nu=\measureMassEmpirical{\delta,N}$ give $\abs{\abs{j_N(x)}-\abs{\fmd(x)}u_N(x)}\le\abs{M_{\sigma,\delta,N}(x)}+C\sigma^\beta u_N(x)$. Dividing by $u_N(x)$ and using $0\le q_N,\abs{\fmd}\le1$, we get
\[
\abs{q_N(x)-\abs{\fmd(x)}}
\le
\min\Bab{1,\frac{\abs{M_{\sigma,\delta,N}(x)}}{u_N(x)}}+C\sigma^\beta.
\]
The minimum is at most $2\pab{\abs{M_{\sigma,\delta,N}(x)}+\abs{u_N(x)-u(x)}}/u(x)$. Indeed, if $u_N(x)\ge u(x)/2$, this quantity bounds $\abs{M_{\sigma,\delta,N}(x)}/u_N(x)$, and otherwise $2\abs{u_N(x)-u(x)}/u(x)>1$.
With $u(x)\ge c_\mu\sigma^{-1}$ from \cref{eq:U-sigma-comparable} and \cref{lem:continuous-quantitative-consistency}\ref{item:comparison-visibility}, we obtain
\[
\abs{q_N(x)-q(x)}
\le
\frac{2\sigma}{c_\mu}\pab{\abs{M_{\sigma,\delta,N}(x)} + \abs{u_N(x)-u(x)}}+C\sigma^\beta
\]
for $x=X_i\in\Gamma_\sigma^{\textup{reg}}$.
If $x\in\Gamma_\sigma^{\textup{sing}}$, we use $\abs{q_N(x)-q(x)}\le1$.
Since $\measureMassEmpirical{\delta,N}(\Omega)\le1/\tau$ by \cref{eq:weights-bounded}, it follows that
\begin{equation}
\label{eq:first-term-pointwise}
\abs{\int_\Omega (q_N-q) \d \measureMassEmpirical{\delta,N}}
\le
\frac{2\sigma}{c_\mu}\pab{\frac{1}{N}\sum_{i=1}^N W_i^{\delta,N}\abs{M_{\sigma,\delta,N}(X_i)}+\frac{1}{\tau}\norm{u_N-u}_{L^\infty(\Omega)}}
+C\sigma^\beta
+\measureMassEmpirical{\delta,N}(\Gamma_\sigma^{\textup{sing}}).
\end{equation}
The expectation of the sum is bounded by \cref{lem:mark-fluctuation-bound}.
For $x\in\Omega$, the function $\varphi_x\coloneqq\mlfr{\sigma}(x-\cdot)$ satisfies $\norm{\varphi_x}_{\textup{BL}(\Gamma)}\le(\norm{\mlfr{}}_\infty+\norm{\nabla\mlfr{}}_\infty)\sigma^{-(d+1)}$, so that \cref{eq:bounded-lipschitz-distance} gives
\[
\abs{u_N(x)-u(x)}
=
\abs{\int_\Gamma\varphi_x\d(\measureMassEmpirical{\delta,N}-\measure)}
\le
C\sigma^{-(d+1)}d_{\textup{BL},\Gamma}(\measureMassEmpirical{\delta,N},\measure).
\]
By the remark after \cref{def:bounded-lipschitz-distance}, $d_{\textup{BL},\Gamma}\le d_{\textup{BL},\overline\Omega}$, and \cref{eq:BB-carrier-estimate} then gives $\Expectation{\norm{u_N-u}_{L^\infty(\Omega)}}\le C\sigma^{-(d+1)}\varepsilon(\delta,N)$.
Finally, \cref{eq:weights-bounded}, \cref{eq:sampling-density-bounds} and \cref{lem:continuous-quantitative-consistency}\ref{item:comparison-visibility} give
\[
\Expectation{\measureMassEmpirical{\delta,N}(\Gamma_\sigma^{\textup{sing}})}
=\Expectation{W_1^{\delta,N}\characteristics{\Gamma_\sigma^{\textup{sing}}}(X_1)}
\le \frac{\theta_+}{\tau}\measure(\Gamma_\sigma^{\textup{sing}})\le C\sigma.
\]
Taking the expectation in \cref{eq:first-term-pointwise} and using $\sigma\le\sigma^\beta$, we obtain
\begin{equation}
\label{eq:first-term-final}
\Expectation{\abs{\int_\Omega (q_N-q) \d \measureMassEmpirical{\delta,N}}}
\le
C\pab{\sigma^\beta+\sigma^{-d}\varepsilon(\delta,N) + \frac{1}{N\sigma^{d-1}}+ \frac{1}{\sqrt{N\sigma^{d-1}}}}.
\end{equation}

\paragraph{The second term.}
Both $\measureMassEmpirical{\delta,N}$ and $\measure$ are supported in $\Gamma$, so only the restriction $q|_\Gamma$ matters.
We claim that $q|_\Gamma$ is Lipschitz with $\Lip(q|_\Gamma)\le C\sigma^{-1}$.
Indeed, let $x,y\in\Gamma$.
If $\abs{x-y}\ge\sigma$, then $\abs{q(x)-q(y)}\le 1\le \sigma^{-1}\abs{x-y}$.
If $\abs{x-y}<\sigma$, then, by \cref{eq:kernel-rescaling,eq:ahlfors-regularity} (applicable since $(R_0+1)\sigma\le r_\Gamma$ by \cref{eq:sigma-zero}),
\[
\abs{j(x)-j(y)} + \abs{u(x)-u(y)}
\le
2\norm{\nabla\mlfr{}}_\infty\sigma^{-(d+1)}\abs{x-y}\measure\pab{B_{(R_0+1)\sigma}(x)}
\le
C\sigma^{-2}\abs{x-y},
\]
while $u(x),u(y)\ge c_\mu\sigma^{-1}$ by \cref{eq:U-sigma-comparable} and $\abs{j}\le u$ on $\Gamma$ by \cref{eq:T_V-is-mV-lambdaV}. Writing $q(x)-q(y) = \nicefrac{\abs{j(x)}-\abs{j(y)}}{u(x)} + \abs{j(y)}\cdot(\nicefrac{u(y)-u(x)}{u(x)u(y)})$ then gives $\abs{q(x)-q(y)}\le C\sigma^{-1}\abs{x-y}$.
By the McShane extension theorem \citep{McShane1934}, $q|_\Gamma$ extends to a function $\widetilde q$ on $\overline\Omega$ with the same Lipschitz constant and, after truncation to $[0,1]$, with $\norm{\widetilde q}_{\textup{BL}(\overline{\Omega})}\le 1 + C\sigma^{-1} \le C\sigma^{-1}$ since $\sigma \le 1$.
Since $\int_\Omega q \d (\measureMassEmpirical{\delta,N}-\measure) = \int_\Omega \widetilde q \d (\measureMassEmpirical{\delta,N}-\measure)$, we deduce that
\begin{equation}
\label{eq:measure-difference-term}
\Expectation{\abs{\int_\Omega q \d (\measureMassEmpirical{\delta,N}-\measure)}}
\le
\norm{\widetilde q}_{\textup{BL}(\overline\Omega)}\Expectation{d_{\textup{BL},\overline\Omega}(\measureMassEmpirical{\delta,N}, \measure)}
\le
C \sigma^{-1} \varepsilon(\delta,N)
\end{equation}
by \cref{eq:BB-carrier-estimate}.
Finally, inserting \cref{eq:first-term-final}, \cref{eq:measure-difference-term} and \cref{lem:continuous-quantitative-consistency}\ref{item:comparison-perimeter} into \cref{eq:error-decomposition} and using $\sigma^{-1}\le\sigma^{-d}$ gives \cref{eq:quantitative-perimeter-bound}.
\end{proof}

\begin{proof}[Proof of \cref{cor:explicit-rate-with-BB-bandwidth}]
In this proof, the constants $C$ may also depend on $\Lambda$ and on the constant $C_0$ of \cref{eq:BB-rate}, and we write $a\coloneqq r/(d+\beta)$, so that $\sigma_N=\Lambda N^{-a}$.
Let $N_0$ be the smallest integer with $N_0\ge(\Lambda/\sigma_0)^{\nicefrac{1}{a}}$, so that $\sigma_N\le\sigma_0$ for $N\ge N_0$.
For such $N$, \cref{eq:BB-rate} gives $\varepsilon(\delta_N,N) \le C_0 N^{-r}$, and the terms of the right-hand side of \cref{eq:quantitative-perimeter-bound} satisfy
\begin{gather}
\sigma_N^\beta \le C N^{-\beta a},
\quad
\sigma_N^{-d} \varepsilon(\delta_N,N) \le C N^{da - r}= C N^{-\beta a},\\
\frac{1}{N\sigma_N^{d-1}}
\le
\frac{1}{\sqrt{N\sigma_N^{d-1}}} \le C N^{-\nicefrac{(1-(d-1)a)}{2}}\le C N^{-\beta a}.
\end{gather}
Here $da-r=-\beta a$ by the definition of $a$, and the last chain uses $N\sigma_N^{d-1}\ge1$ and $(d-1+2\beta)a\le1$. Both follow from $(d+1)a\le1$, which holds because $a\le r\le1/(d+1)$ by $\alpha\wedge\gamma\le1$.
This proves \cref{eq:coherent-final-estimate-N} for $N\ge N_0$.
For the finitely many indices $N<N_0$, we use $0\le q_{\sigma,\delta,N}\le1$, $0\le\abs{\fmd}\le1$ and \cref{eq:weights-bounded}, which give $\Expectation{\abs{\widehat P_{\sigma_N,\delta_N,N} - P(V;\Omega)}}\le 1/\tau+\measure(\Omega)$.

The exponent $a$ balances the first two terms. For $\sigma_N=\Lambda N^{-a'}$ with $a'\ne a$, the larger of the exponents $-\beta a'$ and $da'-r$ is larger than $-\beta a$.
\end{proof}

\section{A point-cloud minimizing movement scheme}
\label{sec:application}

This section explains how the empirical visible perimeter introduced in \cref{def:empirical-coherent-perimeter-functional} is used as the perimeter term in a boundary point-cloud approximation of Otto's minimizing movement scheme for the one-phase Mullins--Sekerka flow \citep{Otto98,ChambolleLaux21}, and how the resulting scheme passes through a change of topology.
We consider the case $d=2$ below and describe the scheme for a phase bounded by a single closed curve. A phase with several boundary curves or several connected components is treated in the same way, curve by curve and component by component, as described in \cref{app:implementation} and in \cref{subsec:application-events}.
\Cref{thm:quantitative-perimeter-consistency} motivates the perimeter term in the scheme, whereas the convergence of the fully discrete scheme is an open problem.
The scheme described below is the one used for all computations of \cref{sec:numerical}.

\subsection{Continuous problem}

We recall the minimizing movement scheme \cref{eq:mms-otto} of \citet{Otto98}, in which $E^{n-1}\subset\Omega\subset\R^2$ is the previous phase and $h>0$ the time step.
For two non-negative measures $\nu_1$, $\nu_2$ on $\Omega$ with the same total mass, the Wasserstein distance is $W_2^2(\nu_1,\nu_2)\coloneqq\inf_{\pi\in\Pi(\nu_1,\nu_2)}\int_{\Omega\times\Omega}\abs{x-y}^2\d\pi(x,y)$ with the set $\Pi(\nu_1,\nu_2)$ of couplings of $\nu_1$ and $\nu_2$, and $W_2\coloneqq+\infty$ if the total masses differ.

Two structural facts about the one-phase flow guide the discretization.
The normal velocity is $v=-\partial_{n} u$, where $u$ is the harmonic function in $E$ with boundary values $u=\kappa$, the curvature of $\partial E$ with the sign convention $\kappa=\Div_{\partial E}n_E$ (positive for convex $E$), and $\partial_n$ is the derivative in the direction of the outer normal $n=n_E$.
Since $u$ is determined separately on each connected component $E_j$ of $E$, disjoint components do not interact, and both the area and the barycentre of every component are conserved, that is,
\begin{equation}
\label{eq:application-conservation}
\begin{aligned}
\odv{}{t}\abs{E_j}
=
-\int_{\partial E_j}\partial_{n} u\dH{1} = 0,
&&
\odv{}{t}\int_{E_j}x\d x
=
-\int_{\partial E_j}x\partial_{n} u\dH{1}
=
-\int_{\partial E_j}\kappa n\dH{1}
=
0,
\end{aligned}
\end{equation}
where the second identity uses $\Delta u=0$ and the last one the closedness of $\partial E_j$.
The discrete scheme enforces both conservation laws at the linearized level for each connected component of the phase (\cref{subsec:application-normal-graph}). This keeps the evolution of two separate components before contact, and of the merged component after contact, consistent with the conserved quantities of the flow.

\subsection{Discrete state and masses}
\label{subsec:application-discrete-state}

At time step $n$, we represent the boundary $\partial E$ by an oriented point-cloud varifold
\[
\begin{aligned}
V_N^n
\coloneqq
\frac{1}{N}\sum_{i=1}^N W_i^n\delta_{(x_i^n,u_i^n)},
&&
u_i^n=\pab{\cos\alpha_i^n,\sin\alpha_i^n},
\end{aligned}
\]
where $u_i^n$ is the stored outer unit normal and $W_i^n$ is the corrected weight of \cref{eq:weights-from-BB}. We write
\(w_i^n\coloneqq \frac{1}{N}W_i^n\)
for the particle mass and use only $w_i^n$ from now on, so that the empirical weight measure and current of \cref{eq:corrected-carrier-measure,eq:corrected-current-measure} read $\sum_i w_i^n\delta_{x_i^n}$ and $\sum_i w_i^n u_i^n\delta_{x_i^n}$.
The kernel $\mlfr{}$ is the Wendland kernel of \cref{rem:wendland}. The density bandwidth $\delta$ and the truncation level $\tau$ are chosen from the initial point cloud by a nearest-neighbour rule and kept fixed, and $\sigma$ is fixed as well (\cref{app:implementation}).

\subsection{Normal-graph variables and conservation constraints}
\label{subsec:application-normal-graph}

We parametrize the positions $\Bab{x_i^n}$ over the previous point cloud by normal displacements,
\begin{equation}
\label{eq:application-position-update}
\begin{aligned}
x_i(s)=x_i^{n-1}+s_i u_i^{n-1},
&&
i=1,\dots,N.
\end{aligned}
\end{equation}
The normal angle $\alpha_i$ is coupled to $s$ by a weak discretization of the relation $\delta\alpha=-\odv{s}{\varsigma}$ between the rotation of the normal and the derivative of a normal graph with respect to the arclength $\varsigma$.
With the masses $w_i^{n-1}$, the tangents $t_i^{n-1}\coloneqq\pab{-(u_i^{n-1})_2,(u_i^{n-1})_1}$ and the kernel $\mlfr{\sigma}$ as test functions, define $A^{n-1},B^{n-1}\in\R^{N\times N}$ by
\begin{equation}
\label{eq:application-angle-matrices}
\begin{aligned}
A_{ki}^{n-1}
\coloneqq
w_i^{n-1}
\mlfr{\sigma}\pab{x_i^{n-1}-x_k^{n-1}},
&&
B_{ki}^{n-1}
\coloneqq
w_i^{n-1}
t_i^{n-1}\cdot
\nabla\mlfr{\sigma}\pab{x_i^{n-1}-x_k^{n-1}},
\end{aligned}
\end{equation}
and set, with $(A^{n-1})^\dagger$ the Moore--Penrose pseudo-inverse,
\begin{equation}
\label{eq:application-angle-constraint}
\begin{aligned}
\delta\alpha_i
\coloneqq
\abb{(A^{n-1})^\dagger B^{n-1}s}_i,
&&
\alpha_i(s)=\alpha_i^{n-1}+\delta\alpha_i,
&&
u_i(s)=\pab{\cos\alpha_i(s),\sin\alpha_i(s)},
\end{aligned}
\end{equation}
with the usual truncated singular value decomposition when the system is ill-conditioned.

\paragraph{Conservation rows.}
The conservation laws \cref{eq:application-conservation} are imposed as linear constraints on $(s,\delta\alpha)$.
Each particle carries a segment of length $w_i^{n-1}$ with three quadrature nodes $\xi_k\in\Bab{-\tfrac12,0,\tfrac12}$ ($k=1,2,3$) of equal weight,
\begin{equation}
\label{eq:application-collocation-points}
\begin{aligned}
p_{ik}^{n-1}
\coloneqq
x_i^{n-1}
+
\xi_k w_i^{n-1}t_i^{n-1},
&&
\zeta_{ik}^{n-1}
\coloneqq
\tfrac13 w_i^{n-1},
&&
v_{ik}(s)
\coloneqq
s_i-\xi_k w_i^{n-1}\delta\alpha_i .
\end{aligned}
\end{equation}
The last quantity is the normal displacement of the node $p_{ik}$ of a rotating segment.
For a connected phase $E$, the linearized area and first moment are the flux sums
\begin{equation}
\label{eq:application-conservation-rows}
\begin{aligned}
\delta A[s]
\coloneqq
\sum_{i=1}^N\sum_{k}\zeta_{ik}^{n-1}v_{ik}(s),
&&
\delta M[s]
\coloneqq
\sum_{i=1}^N\sum_{k}\zeta_{ik}^{n-1}p_{ik}^{n-1}v_{ik}(s),
\end{aligned}
\end{equation}
which discretize the transport identities $\odv{}{t}\abs{E}=\int_{\partial E}v\dH{1}$ and $\odv{}{t}\int_{E}x\d x=\int_{\partial E}xv\dH{1}$ without any ordering of the boundary points.
The \emph{conservation rows} are $\delta A[s]=0$ and $\delta M[s]-\overline x\delta A[s]=0$, where $\overline x$ is the barycentre of $E$, computed from the boundary integrals $\abs{E}=\frac12\int_{\partial E}x\cdot n_E\dH1$ and $\int_{E}x\d x=\frac12\int_{\partial E}\abs{x}^2n_E\dH1$ discretized on the segments as in \cref{eq:application-conservation-rows} (the centring makes the rows invariant under translations of the origin).
These rows are the constraints of the minimization in \cref{subsec:application-discrete-step}.
When the phase has several connected components, the rows are imposed for each component (\cref{app:implementation}).
Collecting the rows in a matrix $C^{n-1}$, we write $s=Q^{n-1}y$ where the columns of $Q^{n-1}$ form an orthonormal basis of $\ker C^{n-1}$.
The rows are quadratures of the transport identities with the masses and normals of the previous state. The area $A$ of \cref{sec:numerical}, computed from the updated positions, masses and normals, is a different discrete quantity, which they do not conserve exactly, and its drift is reported in \cref{sec:numerical}.

\subsection{Linearized Wasserstein term by a boundary integral method}
\label{subsec:application-linearized-wasserstein}

The Wasserstein term in \cref{eq:mms-otto} is replaced by its tangent quadratic form at the previous phase.
We recall its formal derivation, following the dynamic formulation of optimal transport \citep{BenamouBrenier2000} and Otto's interpretation of the one-phase Mullins--Sekerka flow as a Wasserstein gradient flow \citep{Otto98}.
Let $\partial E$ be smooth and let $v$ be a normal velocity on $\partial E$ with $\int_{\partial E_j} v\dH{1}=0$ on every connected component $E_j$ of $E$.
In Otto's formal Riemannian structure, the squared length of $v$ as a tangent vector at the phase $E$ is $\int_E\abs{\nabla\phi_v}^2\d x$, where $\phi_v$ solves the Neumann problem $\Delta\phi_v=0$ in $E$, $\partial_{n}\phi_v=v$ on $\partial E$. This integral is the minimal kinetic energy of the divergence-free velocity fields in $E$ whose normal component on $\partial E$ is $v$ \citep{BenamouBrenier2000}.
If $E_s$ denotes the phase bounded by $\Set{x+s(x)n_E(x) | x\in\partial E}$, the boundary displaced by $s$ in the normal direction during a time step $h$, we set $v=s/h$ and formally linearize
\begin{equation}
\label{eq:application-wasserstein-linearization}
\frac{1}{2h}W_2^2\pab{\chi_{E_s}\d x,\chi_E\d x}
\approx
\frac{h}{2}\int_E\abs{\nabla\phi_v}^2\d x.
\end{equation}

The scheme evaluates the right-hand side of \cref{eq:application-wasserstein-linearization} on the point cloud itself by a boundary integral method, which we describe for a connected phase $E$ with smooth boundary.
The potential $\phi_v$ is the single-layer potential of a density $\lambda$ on $\partial E$,
\[
\begin{aligned}
\phi_v(x)=\int_{\partial E}G(x-y)\lambda(y)\dH1(y),
&&
G(z)\coloneqq-\frac1{2\pi}\log\abs{z},
&&
\pab{\frac12I+K^\ast}\lambda=v
\quad\text{on }\partial E,
\end{aligned}
\]
where $K^\ast$ is the adjoint double-layer operator, $(K^\ast\lambda)(x)=\int_{\partial E}\partial_{n(x)}G(x-y)\lambda(y)\dH1(y)$. This integral equation, which we call the Neumann equation, is solvable exactly when $\int_{\partial E}v\dH1=0$, its solution is unique up to the kernel of $\tfrac12I+K^\ast$, which is spanned by the density $\lambda_{\textup{eq}}$ whose single-layer potential is constant on $E$ (adding it to $\lambda$ adds a constant to $\phi_v$), and $\int_E\abs{\nabla\phi_v}^2\d x=\int_{\partial E}\phi_vv\dH1$ by Green's identity.
The discretization uses the nodes $p_{ik}^{n-1}$ and weights $\zeta_{ik}^{n-1}$ of \cref{eq:application-collocation-points} as collocation points and quadrature weights, with $\abs{z}^2$ replaced by $\abs{z}^2+\varepsilon_{\textup{reg}}^2$ in $G$ and $\partial_nG$, $\varepsilon_{\textup{reg}}\coloneqq0.15\ell$, where $\ell$ is the median of the masses $w_i^{n-1}$.
Let $S$ and $K^\ast_N$ be the resulting $3N\times3N$ matrices (columns scaled by the quadrature weights), and let $\widetilde K\coloneqq D^{\nicefrac12}\pab{\tfrac12I+K^\ast_N}D^{-\nicefrac12}$ with $D$ the diagonal matrix of the quadrature weights.
The matrix $\widetilde K$ has one singular value close to zero. Its right singular vector $a_{\textup{R}}$ is the discrete counterpart of $D^{\nicefrac12}\lambda_{\textup{eq}}$ and its left singular vector $a_{\textup{L}}$ that of the constants, that is, of the compatibility condition.
The Neumann equation with the data $V(s)\coloneqq\pab{v_{ik}(s)}_{i,k}$ is solved by the system bordered with $a_{\textup{L}}$ and $a_{\textup{R}}$ (\cref{app:implementation}), which is invertible. Its bordering unknown is $\mu=a_{\textup{L}}\transpose D^{\nicefrac12}V(s)$, the component of the data along the discrete compatibility direction, and the system solves the equation with the data projected onto the complement of $a_{\textup{L}}$.
The area row of \cref{eq:application-conservation-rows} is the quadrature of the continuous compatibility condition $\int_{\partial E}v\dH1=0$ and differs from $a_{\textup{L}}\transpose D^{\nicefrac12}V(s)=0$ by the discretization. The unknown $\mu$ is monitored as a residual (\cref{app:implementation}).
The linearized Wasserstein term is
\begin{equation}
\label{eq:application-bie-wasserstein}
\widehat W^{n-1}_{h}(s)
\coloneqq
\frac1{2h}V(s)\transpose DS\Lambda V(s),
\end{equation}
where $\Lambda$ is the solution map $V\mapsto\lambda$ of the bordered system, so that $S\Lambda V(s)/h$ is the discrete potential of the velocity $V(s)/h$, and $\widehat W^{n-1}_h(s)$ the discrete form of $\frac h2\int_{\partial E}\phi_vv\dH1$.
We call the resulting quadratic form the \emph{metric} of the scheme.
The matrix $DS\Lambda$ is assembled once per step (one singular value decomposition and one factorization of order $3N$). Each evaluation of $\widehat W^{n-1}_{h}$ and of its derivatives then costs $\order{N^2}$ operations, and no grid enters.
When the phase has several connected components, the construction is applied to each component and the forms are added. Separate components then do not interact, as for the continuous flow, and the area row of each component is the quadrature of its compatibility condition (\cref{app:implementation}).

\subsection{One fully discrete step}
\label{subsec:application-discrete-step}

The full visible perimeter $\widehat P_{\sigma,\delta,N}$ in \cref{eq:discrete-perimeter-estimator} depends on the positions through both the weights and the visibility.
In the minimizing movement step, we use the semi-implicit perimeter, with the visibility of the previous step,
\[
\widehat P^{n-1}(s)
\coloneqq
\sum_{i=1}^N w_i(s)q_i^{n-1},
\]
where $w_i(s)$ is the particle mass of \cref{subsec:application-discrete-state} with the weight \cref{eq:weights-from-BB} evaluated at the displaced positions $x_i(s)$, so that $w_i(0)=w_i^{n-1}$, and $q_i^{n-1}$ is the visibility \cref{eq:discrete-visibility-density} of $V_N^{n-1}$ at $x_i^{n-1}$ (with several curves it is modified as in \cref{eq:application-step-visibility} or \cref{eq:application-cc-visibility}).
Taking the visibility from the previous step prevents the minimization from reducing the perimeter term by rotating nearby normals into a configuration of low visibility within a single step.
The discrete step minimizes
\begin{equation}
\label{eq:application-discrete-mm-step}
\begin{aligned}
F^{n-1}(y)
\coloneqq
\widehat P^{n-1}\pab{Q^{n-1}y}
+
\widehat W^{n-1}_{h}\pab{Q^{n-1}y},
&&
y\in\R^{N-\rank C^{n-1}},
\end{aligned}
\end{equation}
and $y^n$ is the point returned by the trust-region Newton method of \cref{app:implementation}, started at $y=0$. The returned point is stationary for $F^{n-1}$ up to a gradient tolerance.
We then set $s^n=Q^{n-1}y^n$ and update positions and normals by \cref{eq:application-position-update,eq:application-angle-constraint}, which gives $V_N^n$ after the redistribution of \cref{subsec:application-tangential-redistribution} and the update of the masses.
We write $\widehat P^{n}\coloneqq\widehat P^{n-1}(s^n)$ for the value of the perimeter term at $y^n$. This is the quantity reported in \cref{sec:numerical}, and it differs from the visible perimeter \cref{eq:discrete-perimeter-estimator} of the new state $V_N^n$ by the update of the visibility and by the redistribution of \cref{subsec:application-tangential-redistribution}.

\subsection{Tangential redistribution}
\label{subsec:application-tangential-redistribution}

After each minimizing movement step, the point distribution may become non-uniform even when the geometric interface remains well behaved.
Motivated by the artificial tangential velocity method of \citet{PanDongGuoShi2026}, we redistribute the particles along the tangent direction by a tangential diffusion of the point density.
With a redistribution bandwidth $\delta_{\textup{red}}>0$, let $\theta^{\textup{red}}\coloneqq\theta_{\delta_{\textup{red}},N}$ be the density estimator of \cref{sec:estimation} with $d=2$, computed from the current positions $x_j$, and move each particle along its tangent,
\[
\widetilde x_i
\coloneqq
x_i-h_{\textup{red}}\pab{\nabla\log \theta^{\textup{red}}(x_i)\cdot t_i}t_i,
\]
where $t_i$ is the tangent associated with the stored normal $u_i$ as in \cref{subsec:application-normal-graph} and $h_{\textup{red}}>0$ is a step size.
Particles move from regions of high sampling density towards regions of low sampling density, only in the tangent direction.
The stored normal is transported along the curve by the first-order correction $\widetilde\alpha_i\coloneqq\alpha_i+\kappa_i^{\textup{red}}(\widetilde x_i-x_i)\cdot t_i$, where $\kappa_i^{\textup{red}}$ is the regularized mean curvature estimator of \citet{BuetLeonardiMasnou17} (see also \citealp{BuetRumpf22}) evaluated with the masses $w_j$.
The update is iterated until the coefficient of variation of $\theta^{\textup{red}}$ along each closed curve falls below a tolerance, and a substep that increases it is rejected.
This tangential update is a reparametrization of the discrete point cloud. In the smooth limit, it leaves the geometric interface unchanged, and at the discrete level it keeps the quadrature accuracy of the masses and of the collocation \cref{eq:application-collocation-points}.

\subsection{Topological events}
\label{subsec:application-events}

A change of topology involves several closed curves, which we call \emph{loops}, and is carried out by four \emph{events}, instantaneous changes of the discrete state, each subject to a criterion fixed in advance and to a check. The four events are the switch of the visibility, the merging of the constraints, the removal of the \emph{cancelling pair}, that is, of two coincident pieces of the interface with opposite normals, and the reconnection of the remaining arcs.
The treatment of several loops and the definition of the events are given in \cref{app:implementation}.

\section{Numerical experiments}\label{sec:numerical}
All computations of this section were performed with our implementation, which is publicly available.\footnote{\url{https://doi.org/10.5281/zenodo.23113244}}

\subsection{Numerical validation of the convergence rate}

We first test the convergence rate of the empirical visible perimeter \cref{eq:discrete-perimeter-estimator} against the bound of \cref{thm:quantitative-perimeter-consistency}.
The support is the unit circle $\Gamma=S^1\subset\Omega\coloneqq B_2(0)$ with the uniform density $\theta\equiv1/(2\pi)$, so that $\alpha=\gamma=1$ and $r=1/3$, and the kernel is the Wendland kernel of \cref{rem:wendland}.
Two models for the orientations are used.
For $\beta=1$, the orientations are the outer normals, $U_i=X_i$, so that $\fmd(x)=x$ and $P(V;\Omega)=2\pi$.
For $\beta=\nicefrac12$, the orientations are $U_i=\pm X_i$ with $\PP(U_i=X_i\mid X_i)=(1+a(X_i))/2$, where $a(x)\coloneqq\sqrt{\dist_{S^1}(x,e_1)/\pi}$, $e_1\coloneqq(1,0)$ and $\dist_{S^1}$ is the arclength distance on the circle. Then $\fmd(x)=a(x)x$ is $C^{0,\nicefrac12}$ with a cusp at $e_1$, the hidden boundary is carried by the measures $\kappa_x$ as in \cref{rem:hidden-boundary-samples-allowed}, and $P(V;\Omega)=\int_{S^1}a\dH1=4\pi/3$.
The scales are $\delta_N=N^{-\nicefrac13}$, $\tau=\theta_-/2$ and $\sigma_N=N^{-\nicefrac19}$ for $\beta=1$, $\sigma_N=N^{-\nicefrac{1}{12}}$ for $\beta=\nicefrac12$. For each $N\in\{10^2,10^3,10^4,\sci{3}{4},10^5,10^6\}$, the estimator is computed on $10$ independent samples, and \cref[abbrev]{fig:num-rate-validation} shows the mean of the relative error with error bars of one standard deviation.

\begin{figure}
    \centering
    \includegraphics{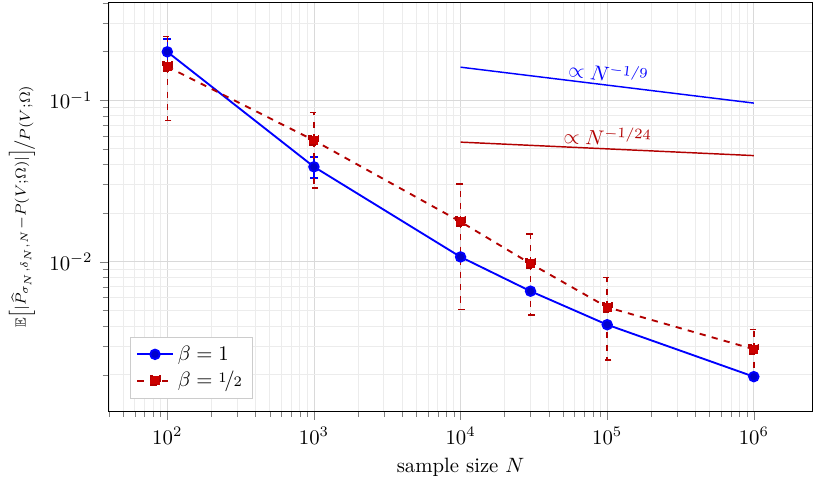}
    \caption{Relative error of the empirical visible perimeter. The lines without markers, labelled by their slopes $-\nicefrac{1}{9}$ and $-\nicefrac{1}{24}$, show the exponents obtained by evaluating the bound \cref{eq:quantitative-perimeter-bound} along the sequences $\delta_N=N^{-\nicefrac13}$ and $\sigma_N=N^{-\nicefrac19}$ ($\beta=1$), $\sigma_N=N^{-\nicefrac{1}{12}}$ ($\beta=\nicefrac12$). Along the sequences of \cref{cor:explicit-rate-with-BB-bandwidth} ($\sigma_N=N^{-\nicefrac19}$ for $\beta=1$ and $\sigma_N=N^{-\nicefrac{2}{15}}$ for $\beta=\nicefrac12$) the exponents are $\nicefrac19$ and $\nicefrac{1}{15}$.}
    \label{fig:num-rate-validation}
\end{figure}

The observed decay in \cref[abbrev]{fig:num-rate-validation} is faster than the bound \cref{eq:quantitative-perimeter-bound} predicts along the chosen sequences, for both models of the orientations.

\subsection{One-phase Mullins--Sekerka dynamics with a change of topology}

We test the scheme of \cref{sec:application} on a single non-convex loop and on two mergers, a concentric configuration whose evolution is known in closed form, and the local contact of two ellipses.
In both mergers, the point cloud passes through the change of topology by the events of \cref{subsec:application-events} in a single run.

\paragraph{Setting.}
All runs start from the point spacing $2\pi/256\simeq\sci{2.5}{-2}$ (arclength sampling), so that $\ell\simeq2\pi/256$ at the initial time, and use $\sigma=0.1$, $\delta$ and $\tau$ from \cref{eq:application-delta-tau-rule}, $h=10^{-5}$ and double precision. One step takes about one second on a workstation ($N\le500$), the assembly of the boundary integral forms included.
None of the stopping criteria of \cref{app:implementation} was met.
We report the perimeter term $\widehat P^{n}$ of \cref{subsec:application-discrete-step}, the area $A\coloneqq\frac12\sum_iw_ix_i\cdot u_i$ of each component, its barycentre $\overline x$, defined by $\sum_iw_i\abs{x_i-\overline x}^2u_i=0$, the circularity $4\pi A/(\widehat P^{n})^{2}$ and, for a single loop, the mean radius $\overline r$ about the barycentre. The sums are restricted to the loops of the component. They discretize the boundary integrals of \cref{subsec:application-normal-graph} at the particles.
The circularity is the quantity by which \citet{Julin23} measure the approach of two-dimensional flat flows to unions of equal disks.

\subsubsection{Single component: a five-petal domain}

The initial domain is $\Set{x | \abs{x}<0.75+0.25\cos(5\varphi)}$, where $\varphi$ is the polar angle of $x$. The run uses $N=291$ and takes $4000$ steps ($t\le0.04$), and \cref[abbrev]{fig:num-flower} shows the evolution.
The perimeter term $\widehat P^{n}$ decreases at every step from $7.09$ to $4.84$ and the circularity increases from $0.47$ to $1.00$. At the end, the state is stationary (maximal normal displacement $\sci{3.5}{-8}$ per step), its perimeter is within $0.2\%$ of $2\pi\sqrt{A/\pi}$, and the relative area error is $-\sci{2.3}{-3}$, two thirds of which arise during the first $50$ steps.

\begin{figure}
    \centering
    \includegraphics[width=\linewidth]{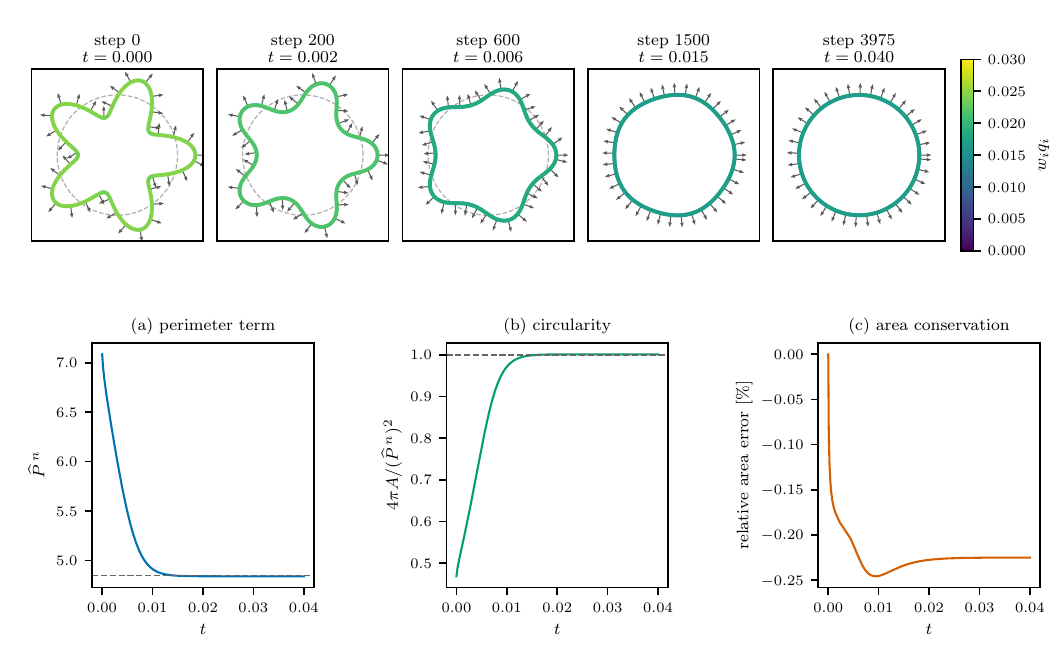}
    \caption{Five-petal domain. Top: point cloud at five times, coloured by $w_iq_i$ (the dashed circle has the area of the initial domain). Bottom: (a) perimeter term $\widehat P^{n}$ (the dashed line is $2\pi\sqrt{A/\pi}$); (b) circularity; (c) relative area error in percent.}
    \label{fig:num-flower}
\end{figure}

\subsubsection{A disk inside an annulus}

Let $E(0)=B_{R_1}(0)\cup\Set{x | R_2<\abs{x}<R_3}$ with $R_1=0.35$, $R_2=0.55$, $R_3=1$.
The disk is stationary. In the annulus $\Set{x | R_-<\abs{x}<R_+}$, the potential is $u=k\log\abs{x}+k_0$, where the constants $k$ and $k_0$ are determined by $u=\kappa$ on both circles (that is, $u=1/R_+$ on the outer circle and $u=-1/R_-$ on the inner one), and $v=-\partial_n u$ gives
\begin{equation}
\label{eq:num-annulus-ode}
\begin{aligned}
\odv{R_\pm}{t} = -\frac{k(R_+,R_-)}{R_\pm},
&&
k(R_+,R_-)=\frac{R_+^{-1}+R_-^{-1}}{\log(R_+/R_-)},
&&
(R_+,R_-)(0)=(R_3,R_2),
\end{aligned}
\end{equation}
so that $\pi(R_+^2-R_-^2)$ is conserved and the hole reaches the disk at $T_c=0.020347$ (by quadrature).
By conservation of the total area, the merged set is the disk of radius $R_\textup{eq}=R_+(T_c)=0.905539$, an equilibrium. At the contact time $T_c$, the circle $\abs{x}=R_1$ carries two coincident pieces of the interface with opposite orientations, a configuration which the scheme handles through the events of \cref{subsec:application-events} and which lies outside \cref{ass:piecewise-regular-support}.
The contact is simultaneous along the whole circle, which is the price for an exact solution.
The three circles carry $N=487$ particles, and the run takes $2400$ steps ($t\le0.024\simeq1.18T_c$).
\Cref{fig:num-annulus} shows the run, and the times $t_1<t_2$ are defined below.

\begin{figure}
    \centering
    \includegraphics[width=\linewidth]{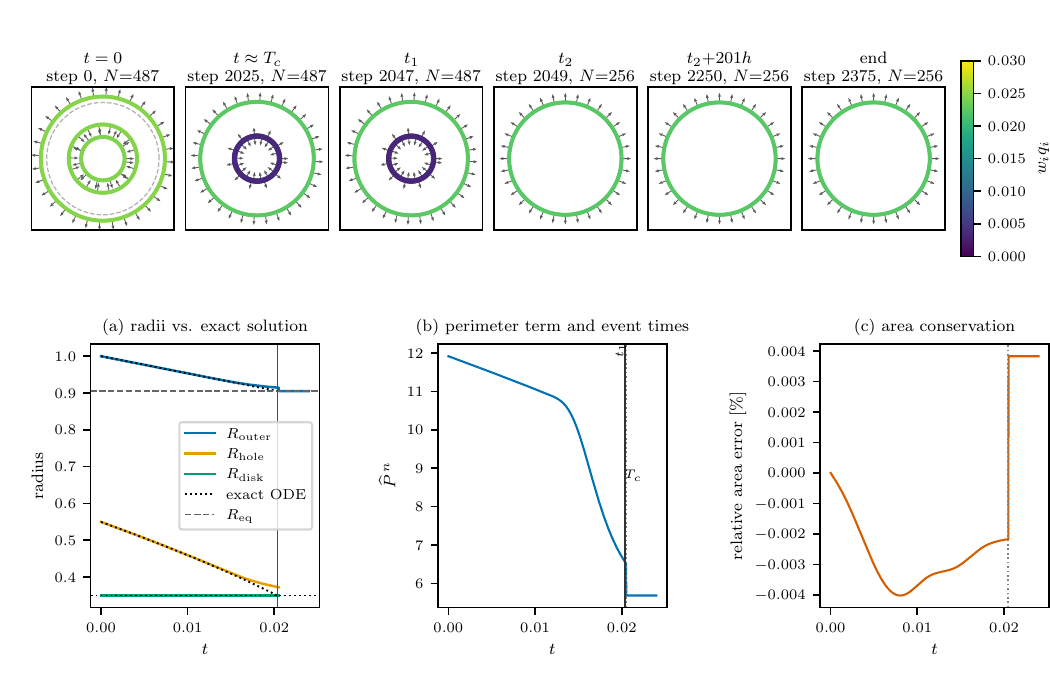}
    \caption{Top: point cloud at $t=0$, near the exact contact time $T_c$, at $t_1$ (merging of the constraints), at $t_2$ (removal of the pair), about $200$ steps later and near the end of the run, coloured by $w_iq_i$ (normals drawn every eighth point; the dashed circle has radius $R_\textup{eq}$). The cancelling pair (disk and hole) is visible at $t_1$ as the dark ring of low visibility. Bottom: (a) the three radii against the exact solution of \cref{eq:num-annulus-ode}; (b) the perimeter term $\widehat P^{n}$, with $T_c$ and $t_1$ marked ($t_2$, two steps after $t_1$, coincides with it at this scale); (c) the relative area error in percent.}
    \label{fig:num-annulus}
\end{figure}

For $t\le0.75T_c$, the radii follow \cref{eq:num-annulus-ode} with maximal relative errors $0$ (disk), $\sci{5.9}{-4}$ (outer circle) and $\sci{3.4}{-3}$ (hole).
The gap between the hole and the disk reaches $\ell$ at $t_1$ (step $2047$), where the two components are merged in the metric and, after the auxiliary step of \cref{app:implementation}, in the constraints.
The pair is held fixed and removed two steps later, at $t_2$ (step $2049$), which rescales the outer loop by $c^\ast=0.9902$. The merged area is conserved to $\sci{4.0}{-5}$ over the whole run (without the transfer of the signed area the removal would change it by $2.0\%$).
Afterward the outer circle is exactly stationary (the minimization returns the zero displacement over the remaining $350$ steps) at a radius whose relative difference from $R_\textup{eq}$ is $\sci{1.1}{-4}$, the difference between the radius $\sqrt{A/\pi}$ given by the discrete area and its continuum value.

\subsubsection{Two ellipses with local contact}

The two ellipses have semi-axes $(a,b)=(0.4,1)$ and centres $(\mp0.45,0)$, so that the initial gap is $0.1$. The run uses $N=376$ and takes $4400$ steps.
There is no closed-form solution. The checks are the conservation of the area and of the barycentre of each component, the monotone decrease of $\widehat P^{n}$ and the relaxation of the merged loop towards the disk of the same area, of radius $R_\textup{eq}=\sqrt{2ab}=0.894427$.
\Cref{fig:num-two-ellipses} shows the run with $t_1$ (switch of the visibility), $t_2$ (merging of the constraints) and $t_3$ (reconnection).

\begin{figure}
    \centering
    \includegraphics[width=\linewidth]{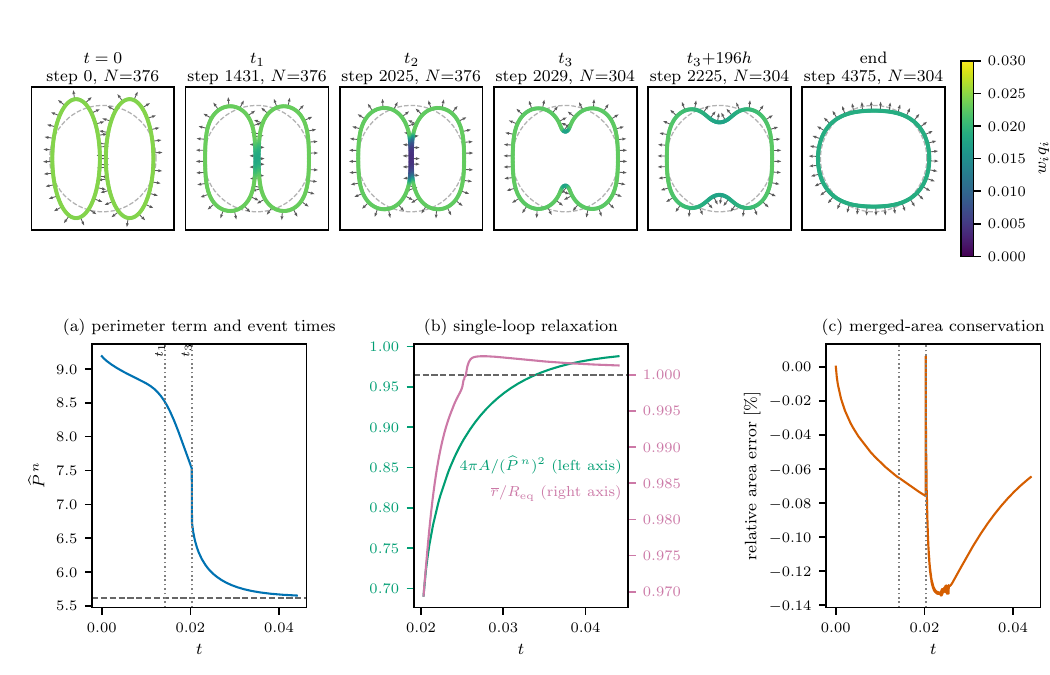}
    \caption{Top: point cloud at $t=0$, at the switch $t_1$, at the merging $t_2$, at the reconnection $t_3$ (the state immediately after it), about $200$ steps later and near the end of the run, coloured by $w_iq_i$ (the dashed circle has radius $R_\textup{eq}$). Bottom: (a) perimeter term $\widehat P^{n}$ with the times $t_1$ and $t_3$ ($t_2$ and $t_3$ coincide at this scale) and the dashed line $2\pi R_\textup{eq}$; (b) circularity and mean radius of the merged loop; (c) relative error of the (merged) area in percent.}
    \label{fig:num-two-ellipses}
\end{figure}

At $t_1$ (step $1431$), the gap reaches $0.55\sigma$ and the visibility on the facing arcs is switched to \cref{eq:application-cc-visibility}. The auxiliary step changes the perimeter term by $\sci{4.4}{-16}$ and the areas and first moments by less than $10^{-8}$.
The gap reaches $\ell$ at $t_2$ (step $2025$), where the two components are merged in the metric, the particles of the facing arcs ($46$) are held fixed, and the constraints are merged after the auxiliary step (the steps with and without the merging differ by less than $10^{-4}\ell$ in position and $\sci{6.4}{-5}$ in angle).
The reconnection at $t_3$ (step $2029$) removes $78$ particles and inserts six.
During the subsequent relaxation of the merged loop, the circularity increases from $0.69$ to $0.99$, $\overline r$ reaches $R_\textup{eq}$ within $0.13\%$, and $\widehat P^{n}$ is within $0.6\%$ of $2\pi R_\textup{eq}$ at the end, where the maximal normal displacement per step is $\sci{2.5}{-5}$.
Before the merging, the barycentre of each ellipse is conserved to $\sci{4.1}{-6}$ by the centred rows of \cref{subsec:application-normal-graph}.
The relative error $e_A^n\coloneqq(A^n-A^0)/A^0$ of the merged area satisfies $\max_n\abs{e_A^n}=\sci{1.3}{-3}$, attained about $340$ steps after the reconnection, with $e_A^n=-\sci{7.6}{-4}$ just before the reconnection and $e_A^n=-\sci{6.5}{-4}$ at the final step.

At the final step, $\abs{R_+-R_\textup{eq}}/R_\textup{eq}=\sci{1.1}{-4}$ in the annulus run, where $R_\textup{eq}=R_+(T_c)$, and $(\widehat P^{n}-2\pi R_\textup{eq})/(2\pi R_\textup{eq})=0.6\%$ in the two-ellipse run, where $R_\textup{eq}=\sqrt{2ab}$. These are discretization errors whose dependence on $\ell$, $\sigma$ and $\varepsilon_{\textup{reg}}$ we have not measured, and the dense factorizations make the cost of a step grow as $N^3$.
Simultaneous multiple contacts are outside the scope of the events implemented here.

\appendix
\section{Implementation details}
\label{app:implementation}
\suppressfloats[t]%

This appendix describes how the scheme of \cref{sec:application} is carried out in the computations of \cref{sec:numerical}, namely the order of the operations of one step, the choice of the parameters, the treatment of several loops and the events.
It states the rules, and the values of all thresholds and tolerances are listed in the documentation of the code (see the footnote in \cref{sec:numerical}).

\paragraph{One step.}
\Cref{alg:application-mm-step} lists the operations of one minimizing movement step, where $\mathsf{Red}$ denotes the redistribution of \cref{subsec:application-tangential-redistribution}. The events of \cref{subsec:application-events} are applied to the state before the step, in the order (i)--(iv).

\begin{algorithm}[t]
\caption{One minimizing movement step $\mathsf{MMStep}$}
\label{alg:application-mm-step}
\begin{algorithmic}[1]
\Require $\pab{x_i^{n-1},\alpha_i^{n-1}}_{i=1}^N$; $h,\sigma,\delta,\tau,\delta_{\textup{red}},h_{\textup{red}}>0$.
\Ensure $V_N^n$.
\State $(w_i^{n-1})_i\gets N^{-1}\times$\cref{eq:weights-from-BB};\quad $(q_i^{n-1})_i\gets$ \cref{eq:application-step-visibility} or \cref{eq:application-cc-visibility}
\State $A^{n-1},B^{n-1}\gets$ \cref{eq:application-angle-matrices};\quad $C^{n-1}\gets$ \cref{eq:application-conservation-rows};\quad $Q^{n-1}\gets$ orthonormal basis of $\ker C^{n-1}$
\State $DS\Lambda\gets$ \cref{eq:application-bie-bordered,eq:application-bie-wasserstein} (per component) \Comment{boundary integral metric}
\State $y^n\gets\argmin$ \cref{eq:application-discrete-mm-step};\quad $s^n\gets Q^{n-1}y^n$
\State $(x_i^n,\alpha_i^n)_i\gets$ \cref{eq:application-position-update,eq:application-angle-constraint}
\State $(x_i^n,\alpha_i^n)_i\gets\mathsf{Red}\pab{(x_i^n,\alpha_i^n,q_i^{n-1})_i}$ \Comment{tangential redistribution}
\State $(w_i^n)_i\gets N^{-1}\times$\cref{eq:weights-from-BB};\quad \Return $V_N^n=\sum_iw_i^n\delta_{(x_i^n,u_i^n)}$
\end{algorithmic}
\end{algorithm}

\paragraph{Parameters.}
The density bandwidth and the truncation level of \cref{subsec:application-discrete-state} are chosen once, from the initial point cloud, by the rule
\begin{equation}
\label{eq:application-delta-tau-rule}
\begin{aligned}
\delta \coloneqq \median_i r_i^{(10)},
&&
\tau \coloneqq \frac{2\mlfr{}(0)}{NZ_{\mlfr{}}^{(1)}\delta},
\end{aligned}
\end{equation}
where $r_i^{(10)}$ is the distance from $x_i$ to its 10th nearest neighbour, $N$ is the initial number of particles and $Z_{\mlfr{}}^{(1)}$ is the normalization constant of the density estimator $\theta_{\delta,N}$ of \cref{sec:estimation} for $d=2$. In the runs of \cref{sec:numerical}, the scales are ordered as $\ell<\sigma<\delta$.

With the notation of \cref{subsec:application-linearized-wasserstein}, the density of the boundary integral metric is computed from
\begin{equation}
\label{eq:application-bie-bordered}
\begin{pmatrix}\widetilde K & a_{\textup{L}}\\ a_{\textup{R}}\transpose & 0\end{pmatrix}
\begin{pmatrix}D^{\nicefrac12}\lambda\\ \mu\end{pmatrix}
=
\begin{pmatrix}D^{\nicefrac12}V(s)\\ 0\end{pmatrix},
\end{equation}
where $a_{\textup{L}}$ and $a_{\textup{R}}$ are the left and right singular vectors of the smallest singular value of $\widetilde K$.
The regularization scale $\varepsilon_{\textup{reg}}$ is the smallest one at which the quadratic form \cref{eq:application-bie-wasserstein} is positive definite on $\ker C^{n-1}$ in the computations of \cref{sec:numerical}. On a circle sampled at the spacing $\ell$, it lowers the form of $v=\cos k\varphi$ by $0.6\%$, $1.2\%$ and $3\%$ for $k=1,3,8$.
The run is stopped when the second smallest singular value of $\widetilde K$, the smallest eigenvalue of the form on $\ker C^{n-1}$ or the residual $\abs{\mu}$ of \cref{eq:application-bie-bordered} relative to $\abs{D^{\nicefrac12}V(s)}$ leaves its bound.
We solve the step \cref{eq:application-discrete-mm-step} by a trust-region Newton conjugate-gradient method with automatic differentiation.

\paragraph{Loops and components.}
The point cloud is organized in loops (\cref{subsec:application-events}), the closed curves of the initial data, and the events reassign whole loops.
The weights \cref{eq:weights-from-BB}, the angle matrices \cref{eq:application-angle-matrices} and the density $\theta^{\textup{red}}$ are computed from the particles of each loop only, and the redistribution step of a loop $\lambda$ is multiplied by the factor $r_\lambda$ of \cref{eq:application-step-visibility}.
The loops are assigned to the connected components of the phase by the regularized winding numbers
\[
\omega_\lambda(p)
\coloneqq
\frac{1}{2\pi}\sum_{i\in\lambda}w_i^{n-1}\frac{u_i^{n-1}\cdot(x_i^{n-1}-p)}{\abs{x_i^{n-1}-p}^2+\ell^2/4}
\]
of the loops $\lambda$ about points $p$ placed on the inner side of each loop at depths of a few $\ell$.
Two loops bound the same component exactly when the rounded winding numbers of all loops about their points coincide, and the run is stopped when the rounding is ambiguous.
The conservation rows \cref{eq:application-conservation-rows} and the system \cref{eq:application-bie-bordered} are set up for each component $b$ from the particles of its loops, and we write the rows $\delta A_b$ and $\delta M_b$.

\paragraph{Visibility of the step.}
When the cloud has several loops, the visibility of the step from $n-1$ to $n$, computed from $V_N^{n-1}$ as in \cref{eq:discrete-visibility-density}, is renormalized loop by loop,
\begin{equation}
\label{eq:application-step-visibility}
\begin{aligned}
q_i^{n-1}
\coloneqq
q_i^{\textup{loop}}
\coloneqq
r_{\lambda}q_i^{\textup{self}},
&&
r_\lambda
\coloneqq
\frac{\sum_{j\in\lambda}w_j q_j}{\sum_{j\in\lambda}w_j q_j^{\textup{self}}},
\end{aligned}
\end{equation}
where $q_i^{\textup{self}}$ is the visibility computed from the particles of the loop $\lambda\ni i$ only, and $q_i$ is the full visibility, that is, the empirical visibility \cref{eq:discrete-visibility-density} of the whole cloud.
The factor $r_\lambda$ carries the visible mass of the loop, $\sum_{j\in\lambda}w_jq_j^{\textup{loop}}=\sum_{j\in\lambda}w_jq_j$, with the profile of the isolated loop. For loops at distance larger than $\sigma$, one has $q^{\textup{loop}}=q$.

\paragraph{Events.}
The dynamics, the conservation rows and the visibility use no ordering of the points, and two approaching loops $\lambda$, $\lambda'$ simply move until their facing arcs coincide.
The coincident arcs then form a discrete counterpart of the hidden boundary of \cref{def:hidden-boundary}. Once the visibility of the step vanishes on them, the minimization \cref{eq:application-discrete-mm-step} becomes ill-conditioned along their displacements, while the conservation rows still treat the two loops as separate components.
The events of \cref{subsec:application-events} lead to a single loop with a single set of conservation rows, in the following order.
Events (i)--(iii) are applied only after an \emph{auxiliary step}, in which one minimizing movement step is computed from the same state with and without the change, and the differences in the objective, in the areas and in the first moments must stay below tolerances.

\paragraph{(i) Switch of the visibility.}
Within distance $\sigma$ of another loop, the full visibility $q$ dips on the facing arcs, while $q^{\textup{loop}}$ applies one factor to the whole loop.
Let $I_\lambda$ be the set of particles of $\lambda$ whose $\sigma$-neighbourhood meets $\lambda'$.
When the gap between the two loops falls below $0.55\sigma$ and $I_\lambda$, $I_{\lambda'}$ are single arcs, the renormalization loop by loop of \cref{eq:application-step-visibility} is replaced on these arcs by the renormalization arc by arc,
\begin{equation}
\label{eq:application-cc-visibility}
\begin{aligned}
q_i^{\textup{arc}}
\coloneqq
r_{I_\lambda}q_i^{\textup{self}}
\quad (i\in I_\lambda),
&&
r_{I_\lambda}
\coloneqq
\frac{\sum_{j\in I_\lambda}w_jq_j}{\sum_{j\in I_\lambda}w_jq_j^{\textup{self}}},
\end{aligned}
\end{equation}
and $q_i^{\textup{arc}}\coloneqq q_i^{\textup{self}}$ elsewhere (there $q=q^{\textup{self}}$, since the other loop is farther than $\sigma$). From then on, the visibility of the step is $q^{\textup{arc}}$. It conserves the visible mass of each arc, coincides with $q^{\textup{loop}}$ when the arc is the whole loop and introduces no new length scale.

\paragraph{(ii) Merging of the constraints.}
When the gap falls below $\ell$, the metric treats the two components as one. The particles of each loop that have a partner on the other loop within distance $1.5\ell$ with an almost opposite normal are excluded from the collocation and held fixed by unit rows added to $C^{n-1}$.
The constraints are merged when, in addition, these particles form one arc on each loop, the paired normals are anti-parallel, and the masses and the currents of the two arcs balance, each up to a threshold. In the concentric benchmark of \cref{sec:numerical}, where the contact is simultaneous along a circle, the same quantities are evaluated on the whole loops.
The rows $\delta A_{b_1},\delta A_{b_2},\delta M_{b_1},\delta M_{b_2}$ of the two components $b_1,b_2$ are then replaced by $\delta A_{b_1}+\delta A_{b_2}$ and the centred $\delta M_{b_1}+\delta M_{b_2}$.

\paragraph{(iii) Removal of the cancelling pair.}
After the merging of the constraints, the coincident arcs form the cancelling pair of \cref{subsec:application-events}, and the two remaining events remove it once the gap has stopped decreasing for three consecutive steps.
If the pair consists of two entire loops (the disk and the hole of the annulus in \cref{sec:numerical}), it is removed, and its signed area $A_\textup{pair}=A_\textup{disk}-A_\textup{hole}$ is transferred to the surviving outer loop by the homothety $x_i\mapsto z+c^\ast(x_i-z)$ about its barycentre $z$, with $c^\ast=\sqrt{(A_\textup{out}+A_\textup{pair})/A_\textup{out}}$, where the areas are those of the polygons through the particles. The factor $c^\ast$ is chosen so that the merged area is conserved.

\paragraph{(iv) Reconnection of the arcs.}
If the pair consists of two facing arcs (the local contact of two convex loops), the particles of each arc within distance $0.08\simeq3.3\ell$ of the other loop are removed, and the two remaining open arcs are reconnected crosswise into a single loop by two cubic Hermite arcs sampled at the existing spacing. A normal displacement of the new loop of least norm then restores the merged area and first moment. Of the two possible reconnections, the one that gives a single simple loop is kept.
The ordering of the points along the loops, needed for the criterion of (ii), the removal and the reconnection only, is derived from the state at the event and is not used by the subsequent steps. The single loop then evolves with $q^{\textup{loop}}=q$.

\paragraph{Thresholds.}
All thresholds were fixed in advance and were not tuned to the reported runs. They are specific to the mollifier $\mlfr{\sigma}$ and the initial data used here, and their transfer to other data has not been tested.

\section{Proof of \cref{prop:imported-scalar-carrier-theorem}}
\label{sec:proofs}

The proof combines the bias and fluctuation estimates of the density estimator (\cref{lem:density-bias,lem:density-at-sample}, which correspond to \citealp[Propositions 6.12 and 3.5]{Boricaud_Buet25}) with a comparison with the empirical measure weighted by the true inverse density (\cref{lem:weighted-empirical}), which avoids the random Lipschitz test functions of \citet[Section 4]{Boricaud_Buet25}.

Throughout this appendix, we write
\[
\begin{aligned}
\theta_\delta(x)\coloneqq\frac{1}{Z_{\mlfr{}}^{(d-1)}\delta^{d-1}}\int_\Gamma\mlfr{}\pab{\frac{x-y}{\delta}}\d\mu_X(y),
&&
x\in\R^d,
\end{aligned}
\]
for the mollified density, so that $\Expectation{\theta_{\delta,N}(x)}=\theta_\delta(x)$ for every deterministic $x$.
We also fix $\delta_0\in(0,1]$ such that $\widetilde R\delta_0\le r_\Gamma$, $R_0\delta_0/c_{\textup{s}}\le r_{\textup{sep}}$ and $2R_0\delta_0\le r_2$, where $\widetilde R\coloneqq\max\{\overline R,2R_0\}$ with $\overline R=R_0/c_{\textup{s}}$ as in \cref{lem:continuous-quantitative-consistency}, and $r_2$ is the radius of the graph representations introduced in the proof of \cref{lem:density-bias}.

\begin{lemma}[Bias of the mollified density]
\label{lem:density-bias}
Under \cref{ass:kernel,ass:piecewise-regular-support,ass:sampling-density}, there exists $C>0$ such that
\(
\int_\Gamma\abs{\theta_\delta-\theta}\d\mu_X\le C\delta^{\alpha\wedge\gamma}
\)
for every $\delta\in(0,\delta_0]$.
\end{lemma}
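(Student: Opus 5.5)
We will split $\Gamma$ into the regular part $\Gamma^{\textup{reg}}_{\delta}\coloneqq\Set{x\in\Gamma | \dist(x,\Sigma)>\widetilde R\delta}$ and its complement, the singular part. On the singular part we only need a crude bound. Since $\theta_\delta(x)\le C\delta^{-(d-1)}\theta_+\measure(B_{R_0\delta}(x))\le C$ by \cref{eq:ahlfors-regularity} (applicable because $R_0\delta\le r_\Gamma$), and $\theta\le\theta_+$, the integrand is bounded. By \cref{eq:singular-neighbourhood-measure} with $r=\widetilde R\delta\le r_\Gamma$, the contribution of the singular part is at most $C\theta_+\measure(\Set{\dist(\cdot,\Sigma)\le\widetilde R\delta})\le C\delta\le C\delta^{\alpha\wedge\gamma}$, using $\delta\le1$.

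For $x\in\Gamma_l$ in the regular part, we argue as in the proof of \cref{lem:continuous-quantitative-consistency}. The choice of $\delta_0$ gives $d_\Sigma(x)\ge R_0\delta/c_{\textup{s}}$, so \cref{eq:branch-separation-global} shows that $B_{R_0\delta}(x)\cap\Gamma\subset\Gamma_l\setminus\Sigma$. In addition, $\dist(x,\partial\Gamma_l)>R_0\delta$, since $\partial\Gamma_l\subset\Sigma$. Hence
\[
\theta_\delta(x)=\frac{1}{Z_{\mlfr{}}^{(d-1)}\delta^{d-1}}\int_{\Gamma_l}\mlfr{}\pab{\frac{x-y}{\delta}}\theta_l(y)\dH{d-1}(y).
\]
We split this as $\theta_l(x)I_\delta(x)$ plus the term with $\theta_l(y)-\theta_l(x)$, where $I_\delta(x)\coloneqq(Z_{\mlfr{}}^{(d-1)}\delta^{d-1})^{-1}\int_{\Gamma_l}\mlfr{}((x-y)/\delta)\dH{d-1}(y)$. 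By \cref{eq:ahlfors-regularity}, the Hölder remainder is bounded by $C\delta^\gamma\cdot\delta^{-(d-1)}\measure(B_{R_0\delta}(x))\le C\delta^\gamma$. It remains to show that $\abs{I_\delta(x)-1}\le C\delta^\alpha$ uniformly on the regular part.

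This is the main step. We use graph representations: since $\Gamma_l$ is a compact $C^{1,\alpha}$ manifold, there is a radius $r_2>0$ such that near each interior point at distance at least $r_2$ from $\partial\Gamma_l$, $\Gamma_l\cap B_{r_2}(x)$ is the graph over the tangent plane $T_x\Gamma_l$ of a $C^{1,\alpha}$ function $g$ with $g(0)=0$, $\nabla g(0)=0$ and $[\nabla g]_\alpha\le C$ uniformly. One caveat is that points closer than $r_2$ to the boundary but farther than $\widetilde R\delta$ from $\Sigma$ also need a graph representation on $B_{2R_0\delta}(x)$. We handle this by choosing $r_2$ as the uniform radius on which, for every $x\in\Gamma_l$, the connected component of $\Gamma_l\cap B_{r_2}(x)$ containing $x$ is a graph over a subset of $T_x\Gamma_l$. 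Away from $\partial\Gamma_l$ at scale $R_0\delta$, this subset contains the full disk of radius $R_0\delta$. The constraint $2R_0\delta_0\le r_2$ is exactly what makes this work.

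Writing $y=x+(z,g(z))$ in these coordinates, we obtain $\abs{g(z)}\le C\abs{z}^{1+\alpha}$, Jacobian $\sqrt{1+\abs{\nabla g}^2}=1+O(\abs{z}^{2\alpha})$, and $\abs{\mlfr{}((z,g(z))/\delta)-\mlfr{}((z,0)/\delta)}\le\norm{\nabla\mlfr{}}_\infty C\abs{z}^{1+\alpha}/\delta\le C\delta^\alpha$ for $\abs{z}\le R_0\delta$. The change of variables $z=\delta z'$ then turns the unperturbed integral into exactly $Z_{\mlfr{}}^{(d-1)}$, because $\mlfr{}$ is radial. The perturbations contribute $O(\delta^\alpha)$ after dividing by $\delta^{d-1}$. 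If the graph patch happens to see another sheet of $\Gamma_l$ within $B_{R_0\delta}(x)$, this is excluded for $\delta$ small by embeddedness, which is again absorbed into the choice of $r_2$.

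Combining the two parts, $\abs{\theta_\delta-\theta}\le C(\delta^\alpha+\delta^\gamma)\le C\delta^{\alpha\wedge\gamma}$ on the regular part. Integrating against $\mu_X$, which has total mass $1$, and adding the singular contribution proves the lemma.
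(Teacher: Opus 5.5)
Your proposal is correct and follows essentially the same route as the paper. The paper also splits at $\widetilde R\delta$ and uses the crude bound $\theta_\delta\le C$ on the singular part. On the regular part it likewise splits into a H\"older term of order $\delta^\gamma$ and a geometric term $\abs{I_\delta-1}=O(\delta^\alpha)$, handled by a uniform $C^{1,\alpha}$ graph representation, the Lipschitz bound on $\mlfr{}$ and the Jacobian estimate. The one step you assert rather than prove is that the projection of $\Gamma_l\cap B_{r_2}(x)$ onto $T_x\Gamma_l$ contains the disk of radius $R_0\delta$. The paper settles this by an open--closed (invariance of domain) argument giving the disk of radius $\frac12\min\{r_2,\dist(x,\partial\Gamma_l)\}$, which is why it needs $\dist(x,\partial\Gamma_l)>\widetilde R\delta\ge 2R_0\delta$ (available to you) rather than only the bound $\dist(x,\partial\Gamma_l)>R_0\delta$ that you state.
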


\begin{proof}
Since each $\Gamma_l$ is a compact $C^{1,\alpha}$ manifold with boundary, there exist $r_2\in(0,r_\Gamma]$ and $C_g>0$ such that, for every $l$ and every $x\in\Gamma_l$, there is $g_x\in C^{1,\alpha}(B^{d-1}_{r_2})$ on the ball $B^{d-1}_{r_2}$ of radius $r_2$ centred at the origin of the tangent plane $T_x\Gamma_l$ with $g_x(0)=0$, $\nabla g_x(0)=0$ and $\abs{\nabla g_x(u)}\le C_g\abs{u}^\alpha\le1$, whose graph $M_x\coloneqq\Set{x+(u,g_x(u)) | u\in B^{d-1}_{r_2}}$ (in the coordinates $T_x\Gamma_l\times(T_x\Gamma_l)^\perp$) contains $\Gamma_l\cap B_{r_2}(x)$. This is the local graph representation of a manifold with boundary, extended beyond $\partial\Gamma_l$, with constants uniform in $x$ by compactness.
In particular, $\abs{g_x(u)}\le\min\{C_g\abs{u}^{1+\alpha},\abs{u}\}$ on $B^{d-1}_{r_2}$.
Let $D_x\subset B^{d-1}_{r_2}$ be the projection of $\Gamma_l\cap B_{r_2}(x)$ onto $T_x\Gamma_l$, so that $\Gamma_l\cap B_{r_2}(x)=\Set{x+(u,g_x(u)) | u\in D_x}$.
Then $D_x\supset B^{d-1}_{\rho_x}$ with $\rho_x\coloneqq\frac12\min\{r_2,\dist(x,\partial\Gamma_l)\}$ ($\rho_x\coloneqq r_2/2$ if $\partial\Gamma_l=\emptyset$).
If $\rho_x=0$, there is nothing to prove, so let $\rho_x>0$.
Indeed, a point $u\in D_x\cap B^{d-1}_{\rho_x}$ corresponds to $y\in\Gamma_l$ with $\abs{y-x}\le\sqrt2\abs{u}<2\rho_x\le\dist(x,\partial\Gamma_l)$, so that $y$ is an interior point of $\Gamma_l$ and, by invariance of domain, a neighbourhood of $y$ in $M_x$ lies in $\Gamma_l$. Hence $D_x\cap B^{d-1}_{\rho_x}$ is open in $B^{d-1}_{\rho_x}$.
It is also closed in $B^{d-1}_{\rho_x}$, being the trace of the projection of the compact set $\Gamma_l\cap\overline{B_{\sqrt2\rho_x}(x)}$, and it contains $0$, so that it is all of $B^{d-1}_{\rho_x}$.
Let $\Gamma_\delta^{\textup{sing}}\coloneqq\Set{x\in\Gamma | \dist(x,\Sigma)\le\widetilde R\delta}$ and $\Gamma_\delta^{\textup{reg}}\coloneqq\Gamma\setminus\Gamma_\delta^{\textup{sing}}$ (the sets of \cref{lem:continuous-quantitative-consistency} with $\widetilde R\delta$ in place of $\overline R\sigma$).
By \cref{eq:kernel-comparison,eq:ahlfors-regularity} (applicable since $R_0\delta\le r_\Gamma$), $0\le\theta_\delta\le C$ on $\Gamma$, so that $\abs{\theta_\delta-\theta}\le C$ on $\Gamma$, and $\mu_X(\Gamma_\delta^{\textup{sing}})\le C\delta$ by \cref{eq:singular-neighbourhood-measure}, since $\widetilde R\delta\le r_\Gamma$.

Let $x\in\Gamma_\delta^{\textup{reg}}\cap\Gamma_l$.
As in the proof of \cref{lem:continuous-quantitative-consistency}, \cref{eq:branch-separation-global} and $\delta\le\delta_0$ give $\dist(x,\Gamma_{l'})\ge R_0\delta$ for $l'\ne l$, so that $B_{R_0\delta}(x)\cap\Gamma=B_{R_0\delta}(x)\cap\Gamma_l$. Moreover, $\dist(x,\partial\Gamma_l)\ge\dist(x,\Sigma)>\widetilde R\delta\ge2R_0\delta$ with $2R_0\delta\le r_2$ gives $\rho_x\ge R_0\delta$, so that $B_{R_0\delta}(x)\cap\Gamma_l$ lies in the graph of $g_x$ over $B^{d-1}_{R_0\delta}\subset D_x$, where $\theta=\theta_l\in C^{0,\gamma}(\Gamma_l)$ up to the $\measure$-null set $\Sigma$.
We write $\theta_\delta(x)-\theta(x)=\nicefrac{(I_1+\theta(x)I_2)}{(Z_{\mlfr{}}^{(d-1)}\delta^{d-1})}$ with
\[
\begin{aligned}
I_1\coloneqq\int_{\Gamma_l}\mlfr{}\pab{\frac{x-y}{\delta}}\pab{\theta(y)-\theta(x)}\dH{d-1}(y),&&
I_2\coloneqq\int_{\Gamma_l}\mlfr{}\pab{\frac{x-y}{\delta}}\dH{d-1}(y)-Z_{\mlfr{}}^{(d-1)}\delta^{d-1}.
\end{aligned}
\]
The H\"older continuity of $\theta_l$ and \cref{eq:ahlfors-regularity} give $\abs{I_1}\le C\delta^{d-1+\gamma}$.
For $I_2$, we parametrize $y=x+(u,g_x(u))$, $u\in B^{d-1}_{R_0\delta}$. Since $\mlfr{}((x-y)/\delta)$ vanishes for $\abs{u}>R_0\delta$, we obtain
\[
\int_{\Gamma_l}\mlfr{}\pab{\frac{x-y}{\delta}}\dH{d-1}(y)=\int_{B^{d-1}_{R_0\delta}}\mlfr{}\pab{\frac{(u,g_x(u))}{\delta}}J_x(u)\d u
\]
with the area element $J_x(u)\coloneqq\sqrt{\det\pab{I+\nabla g_x(u)\transpose\nabla g_x(u)}}$, while $\int_{B^{d-1}_{R_0\delta}}\mlfr{}((u,0)/\delta)\d u=Z_{\mlfr{}}^{(d-1)}\delta^{d-1}$ since $\mlfr{}$ is radial.
On $\abs{u}\le R_0\delta$, we have $\abs{\mlfr{}((u,g_x(u))/\delta)-\mlfr{}((u,0)/\delta)}\le\Lip(\mlfr{})C_g\abs{u}^{1+\alpha}/\delta\le C\delta^\alpha$ and $0\le J_x(u)-1\le C\abs{\nabla g_x(u)}^2\le C\delta^{2\alpha}$, so that $\abs{I_2}\le C\delta^{d-1+\alpha}$ after integration over $B^{d-1}_{R_0\delta}$.
Therefore $\abs{\theta_\delta-\theta}\le C\delta^{\alpha\wedge\gamma}$ on $\Gamma_\delta^{\textup{reg}}$, and $\int_\Gamma\abs{\theta_\delta-\theta}\d\mu_X\le C\delta^{\alpha\wedge\gamma}+C\mu_X(\Gamma_\delta^{\textup{sing}})\le C\delta^{\alpha\wedge\gamma}$ since $\alpha\wedge\gamma,\delta\le1$.
\end{proof}

\begin{lemma}[Density estimate at a sample point]
\label{lem:density-at-sample}
Under \cref{ass:kernel,ass:piecewise-regular-support,ass:sampling-density,ass:marked-iid-model}, there exists $C>0$ such that, for every $\delta\in(0,\delta_0]$ and every $N\ge2$,
\[
\Expectation{\abs{\theta_{\delta,N}(X_1)-\theta(X_1)}}
\le
C\pab{\delta^{\alpha\wedge\gamma}+\frac{1}{\sqrt{N\delta^{d-1}}}+\frac{1}{N\delta^{d-1}}}.
\]
\end{lemma}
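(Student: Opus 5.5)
We split the error at the sample point into a bias part and a fluctuation part,
\[
\abs{\theta_{\delta,N}(X_1)-\theta(X_1)}
\le
\abs{\theta_{\delta,N}(X_1)-\theta_\delta(X_1)}+\abs{\theta_\delta(X_1)-\theta(X_1)}.
\]
The second term has expectation $\int_\Gamma\abs{\theta_\delta-\theta}\d\mu_X\le C\delta^{\alpha\wedge\gamma}$ by \cref{lem:density-bias}, because $X_1\sim\mu_X$. The only issue for the first term is that $\theta_{\delta,N}$ contains the summand $i=1$, which depends on $X_1$. We separate it:
\[
\theta_{\delta,N}(X_1)=\frac{\mlfr{}(0)}{NZ_{\mlfr{}}^{(d-1)}\delta^{d-1}}+\frac{N-1}{N}\,\theta^{(1)}_{\delta,N-1}(X_1),
\qquad
\theta^{(1)}_{\delta,N-1}(x)\coloneqq\frac{1}{(N-1)Z_{\mlfr{}}^{(d-1)}\delta^{d-1}}\sum_{i=2}^N\mlfr{}\pab{\frac{x-X_i}{\delta}}.
\]
The diagonal term is deterministic and of size $C/(N\delta^{d-1})$. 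This produces the last term of the bound.

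For the leave-one-out part we write
\[
\tfrac{N-1}{N}\theta^{(1)}_{\delta,N-1}(X_1)-\theta_\delta(X_1)
=\tfrac{N-1}{N}\pab{\theta^{(1)}_{\delta,N-1}(X_1)-\theta_\delta(X_1)}-\tfrac1N\theta_\delta(X_1).
\]
By \cref{eq:kernel-comparison,eq:ahlfors-regularity} we have $0\le\theta_\delta\le C$ on $\Gamma$ for $\delta\le\delta_0$, since $R_0\delta\le r_\Gamma$. So the last piece is at most $C/N\le C/(N\delta^{d-1})$, using $\delta\le1$.

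We then condition on $X_1=x\in\Gamma$. The variables $X_2,\dots,X_N$ are i.i.d.\ with law $\mu_X$ and independent of $X_1$, so $\theta^{(1)}_{\delta,N-1}(x)$ is an average of $N-1$ i.i.d.\ terms with mean $\theta_\delta(x)$. Jensen (Cauchy--Schwarz) gives
\[
\ExpectationGiven{\abs{\theta^{(1)}_{\delta,N-1}(X_1)-\theta_\delta(X_1)}}{X_1=x}
\le
\pab{\frac{1}{(N-1)(Z_{\mlfr{}}^{(d-1)})^2\delta^{2(d-1)}}\int_\Gamma\mlfr{}\pab{\frac{x-y}{\delta}}^2\d\mu_X(y)}^{\nicefrac12}.
\]
The integral is at most $\theta_+\norm{\mlfr{}}_\infty^2\measure(B_{R_0\delta}(x))\le C\delta^{d-1}$, again by \cref{eq:ahlfors-regularity}. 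This gives the bound $C/\sqrt{(N-1)\delta^{d-1}}\le C/\sqrt{N\delta^{d-1}}$, where we use $N\ge2$ so that $N-1\ge N/2$. Integrating over $x$ and collecting the four contributions yields the claim.

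There is no real obstacle here. The step that needs care is the dependence of $\theta_{\delta,N}(X_1)$ on $X_1$ itself, which the leave-one-out decomposition handles. We also need the uniform bound on $\theta_\delta$ and on the second moment of the kernel, which holds for every $x\in\Gamma$ including those near $\Sigma$, because \cref{eq:ahlfors-regularity} holds uniformly on $\Gamma$. This is why no splitting into regular and singular parts is needed beyond what \cref{lem:density-bias} already does.
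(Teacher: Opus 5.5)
Your proposal is correct and follows essentially the same route as the paper's proof. Both condition on $X_1=x$, split off the diagonal term $\psi(0)/(NZ_{\psi}^{(d-1)}\delta^{d-1})$, control the leave-one-out average by its variance $C/((N-1)\delta^{d-1})$ via Ahlfors regularity and $N-1\ge N/2$, absorb the $\theta_\delta(x)/N$ term using $\theta_\delta\le C$ on $\Gamma$ (and $\delta\le 1$), and handle the bias term by \cref{lem:density-bias}.
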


\begin{proof}
Conditionally on $X_1=x$, we have $\theta_{\delta,N}(x)=\mlfr{}(0)/(NZ_{\mlfr{}}^{(d-1)}\delta^{d-1})+\frac{N-1}{N}\widetilde\theta(x)$, where $\widetilde\theta(x)\coloneqq((N-1)Z_{\mlfr{}}^{(d-1)}\delta^{d-1})^{-1}\sum_{j=2}^N\mlfr{}((x-X_j)/\delta)$ is the average of $N-1$ independent random variables, each with mean $\theta_\delta(x)$ and, by \cref{eq:ahlfors-regularity}, with variance at most $C\delta^{1-d}$ (as in the proof of \citealp[Proposition 3.5]{Boricaud_Buet25}).
Hence $\widetilde\theta(x)$ has mean $\theta_\delta(x)$ and variance at most $C/((N-1)\delta^{d-1})$, and the Cauchy--Schwarz inequality with $N-1\ge N/2$ gives
\[
\ExpectationGiven{\abs{\theta_{\delta,N}(X_1)-\theta(X_1)}}{X_1=x}
\le
\frac{C}{N\delta^{d-1}}
+\frac{\theta_\delta(x)}{N}
+\frac{C}{\sqrt{N\delta^{d-1}}}
+\abs{\theta_\delta(x)-\theta(x)}.
\]
Integrating with respect to $\mu_X(\d x)$, using $\theta_\delta\le C$ on $\Gamma$ and \cref{lem:density-bias} gives the claim.
\end{proof}

The next lemma bounds the bounded-Lipschitz distance between a weighted empirical measure and its mean.
The proof partitions $\Gamma$ into cubes of side $\rho$ and bounds the fluctuation of the masses of the cubes by the Cauchy--Schwarz inequality, as in \citet[Proposition 3.1]{Dudley1969}.
A single scale $\rho$ suffices for \cref{prop:imported-scalar-carrier-theorem}, whereas \citet[Theorem 3.2]{Dudley1969} chains such partitions over all scales.
\begin{lemma}[Weighted empirical measures]
\label{lem:weighted-empirical}
Under \cref{ass:piecewise-regular-support,ass:sampling-density,ass:marked-iid-model}, let $G>0$ and let $g\colon\Gamma\to[0,G]$ be Borel measurable.
Then there exists $C>0$, depending only on $\Gamma$ and $d$, such that
\[
\begin{aligned}
\Expectation{d_{\textup{BL},\overline\Omega}\pab{\frac1N\sum_{i=1}^Ng(X_i)\delta_{X_i},\ g\mu_X}}
\le
CGN^{-\nicefrac{1}{(d+1)}}
&&
\text{for every } N\in\Z_{>0}.
\end{aligned}
\]
\end{lemma}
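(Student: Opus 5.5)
We follow the single-scale partition argument of \citet[Proposition 3.1]{Dudley1969}. Write $\nu_N\coloneqq\frac1N\sum_{i=1}^Ng(X_i)\delta_{X_i}$ and $\nu\coloneqq g\mu_X$. Both measures are supported in $\Gamma$, so it suffices to take test functions $\varphi$ with $\norm{\varphi}_{\textup{BL}(\overline\Omega)}\le1$ and integrate only over $\Gamma$. Since $\int\varphi\d(\nu_N-\nu)$ is unchanged by adding a constant to $\varphi$ up to the error $\abs{\varphi(x_0)}\abs{\nu_N(\Gamma)-\nu(\Gamma)}$, we keep that term separately; it has expectation at most $G/\sqrt N$ by the variance of an average of i.i.d.\ variables bounded by $G$.

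Fix $\rho\in(0,1]$ and partition $\R^d$ into half-open cubes of side $\rho$. Let $Q_1,\dots,Q_K$ be those meeting $\Gamma$, and set $A_k\coloneqq Q_k\cap\Gamma$. Each $Q_k$ lies in a ball of radius $\sqrt d\rho$ centred at a point of $\Gamma$. By \cref{lem:ahlfors-regularity}, a packing argument then gives $K\le C\rho^{-(d-1)}$ for $\rho$ small; for $\rho$ of order one, $K$ is bounded by the compactness of $\Gamma$. For each $k$, pick $z_k\in A_k$. Then
\[
\abs{\int\varphi\d(\nu_N-\nu)}\le\sum_k\int_{A_k}\abs{\varphi-\varphi(z_k)}\d(\nu_N+\nu)+\sum_k\abs{\varphi(z_k)}\abs{\nu_N(A_k)-\nu(A_k)}.
\]
The first sum is at most $\sqrt d\rho\,(\nu_N(\Gamma)+\nu(\Gamma))$, whose expectation is at most $2\sqrt d\rho G$. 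The second sum is at most $\sum_k\abs{\nu_N(A_k)-\nu(A_k)}$, which no longer depends on $\varphi$; this is what lets us take the supremum inside the expectation.

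We have $\nu_N(A_k)=\frac1N\sum_ig(X_i)\characteristics{A_k}(X_i)$, with mean $\nu(A_k)$ and variance at most $G^2\mu_X(A_k)/N$. By Cauchy--Schwarz, first termwise and then in $k$,
\[
\Expectation{\sum_k\abs{\nu_N(A_k)-\nu(A_k)}}\le\frac{G}{\sqrt N}\sum_k\mu_X(A_k)^{1/2}\le\frac{G}{\sqrt N}K^{1/2}\le CG\,N^{-1/2}\rho^{-(d-1)/2}.
\]
Hence the expected distance is at most $CG\pab{\rho+N^{-1/2}\rho^{-(d-1)/2}}$. Choosing $\rho=N^{-1/(d+1)}$ balances the two terms and gives $CGN^{-1/(d+1)}$. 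The constant term $G/\sqrt N$ is dominated by this, since $d\ge2$.

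The only point needing care is the cube count $K\le C\rho^{-(d-1)}$ uniformly in $\rho\in(0,1]$. We obtain it from the lower Ahlfors bound \cref{eq:ahlfors-regularity}: the disjoint balls $B_{\rho/2}$ centred at the points $z_k$ of a maximal $\rho$-separated subfamily each carry $\measure$-mass at least $c\rho^{d-1}$. Each cube contains at most a bounded number of points of such a family, up to a dimensional factor. For $\rho>r_\Gamma$, the count is bounded by $\Gamma$ alone, so $C$ depends only on $\Gamma$ and $d$. This also covers small $N$, where $\rho=N^{-1/(d+1)}$ is not small.
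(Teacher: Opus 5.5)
Your proposal is correct and is essentially the paper's proof. It uses the same partition into cubes of side $\rho=N^{-1/(d+1)}$, the same piecewise-constant approximation of $\varphi$ with error $\sqrt d\,\rho$ on $\Gamma$, and the same Cauchy--Schwarz bound $G\sqrt{K/N}$ for the cube masses. The differences are minor. The paper counts the cubes directly: the balls $B_\rho(x_Q)$ lie in the concentric cubes of side $3\rho$, which overlap at most $3^d$ times. It also treats $\rho>r_\Gamma$ by the trivial bound $2G$, and it needs no separate constant term; your $G/\sqrt N$ term is harmless but never used. In your separated-subfamily count, the sentence about each cube containing few points of the family is not the fact you need. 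What you need is that every chosen point $z_k$ lies within $\rho$ of a point of the maximal family, and each such point is within $\rho$ of the chosen points of at most $3^d$ cubes.
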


\begin{proof}
Write $\lambda_N^g\coloneqq\frac1N\sum_ig(X_i)\delta_{X_i}$ and $\rho\coloneqq N^{-\nicefrac{1}{(d+1)}}$.
If $\rho>r_\Gamma$, then $d_{\textup{BL},\overline\Omega}(\lambda_N^g,g\mu_X)\le\lambda_N^g(\Gamma)+G\le2G\le2Gr_\Gamma^{-1}N^{-\nicefrac{1}{(d+1)}}$, so we assume $\rho\le r_\Gamma$.
Among the half-open cubes $\rho(k+[0,1)^d)$, $k\in\Z^d$, which partition $\R^d$, let $\mathcal Q$ be the family of those that meet $\Gamma$, and for $Q\in\mathcal Q$ pick $x_Q\in Q\cap\Gamma$.
The balls $B_\rho(x_Q)$ are contained in the concentric cubes of side $3\rho$, each point of $\R^d$ lies in at most $3^d$ of these enlarged cubes, and $\measure(B_\rho(x_Q))\ge c_\Gamma\rho^{d-1}$ by \cref{eq:ahlfors-regularity}. Hence the number $\#\mathcal Q$ of cubes in $\mathcal Q$ satisfies $\#\mathcal Q\le3^d\measure(\Gamma)/(c_\Gamma\rho^{d-1})$.
Let $\varphi\in\BL(\overline\Omega)$ with $\norm{\varphi}_{\textup{BL}(\overline\Omega)}\le1$ and set $\varphi_\rho\coloneqq\sum_{Q\in\mathcal Q}\varphi(x_Q)\characteristics{Q}$, so that $\abs{\varphi-\varphi_\rho}\le\sqrt d\rho$ on $\Gamma$.
Then
\[
\abs{\int\varphi\d(\lambda_N^g-g\mu_X)}
\le
\sqrt d\rho\pab{\lambda_N^g(\Gamma)+G}
+\sum_{Q\in\mathcal Q}\abs{\lambda_N^g(Q)-(g\mu_X)(Q)},
\]
and the right-hand side does not depend on $\varphi$.
Since $\Expectation{\lambda_N^g(\Gamma)}\le G$ and, for each $Q$, $\lambda_N^g(Q)$ is the average of $N$ independent copies of $g(X_1)\characteristics{Q}(X_1)$ with mean $(g\mu_X)(Q)$, the Cauchy--Schwarz inequality gives
\[
\sum_{Q\in\mathcal Q}\Expectation{\abs{\lambda_N^g(Q)-(g\mu_X)(Q)}}
\le
\sum_{Q\in\mathcal Q}\sqrt{\frac{G^2\mu_X(Q)}{N}}
\le
G\sqrt{\frac{\#\mathcal Q}{N}}
\le
G\sqrt{\frac{3^d\measure(\Gamma)}{c_\Gamma}}\rho^{-(d-1)/2}N^{-1/2},
\]
where we used $\sum_Q\mu_X(Q)=1$.
Taking the supremum over $\varphi$ and then the expectation, and using $\rho^{-(d-1)/2}N^{-1/2}=\rho$, we obtain the claim.
\end{proof}

\begin{proof}[Proof of \cref{prop:imported-scalar-carrier-theorem}]
Set $\lambda_N^\circ\coloneqq\frac1N\sum_{i=1}^N\theta(X_i)^{-1}\delta_{X_i}$, the empirical measure weighted by the true inverse density, and note that $\theta^{-1}\mu_X=\measure$.
Since $\tau\le\theta_-\le\theta$, we have $\Phi_\tau(\theta(X_i))=1/\theta(X_i)$, and $\Phi_\tau$ is Lipschitz on $[0,\infty)$ with constant $4/\tau^2$.
For every $\varphi$ with $\norm{\varphi}_{\textup{BL}(\overline\Omega)}\le1$,
\[
\abs{\int\varphi\d(\measureMassEmpirical{\delta,N}-\lambda_N^\circ)}
\le
\frac1N\sum_{i=1}^N\abs{\Phi_\tau(\theta_{\delta,N}(X_i))-\Phi_\tau(\theta(X_i))}
\le
\frac{4}{\tau^2}\frac1N\sum_{i=1}^N\abs{\theta_{\delta,N}(X_i)-\theta(X_i)},
\]
and the samples are exchangeable, so that \cref{lem:density-at-sample} gives, for $\delta\le\delta_0$ and $N\ge2$,
\[
\Expectation{d_{\textup{BL},\overline\Omega}(\measureMassEmpirical{\delta,N},\lambda_N^\circ)}
\le
\frac{4}{\tau^2}\Expectation{\abs{\theta_{\delta,N}(X_1)-\theta(X_1)}}
\le
C\pab{\delta^{\alpha\wedge\gamma}+\frac{1}{\sqrt{N\delta^{d-1}}}+\frac{1}{N\delta^{d-1}}}.
\]
\Cref{lem:weighted-empirical} with $g=\theta^{-1}\le\theta_-^{-1}$ gives $\Expectation{d_{\textup{BL},\overline\Omega}(\lambda_N^\circ,\measure)}\le CN^{-\nicefrac{1}{(d+1)}}$ and the triangle inequality yields
\[
\varepsilon(\delta,N)
\le
C\pab{\delta^{\alpha\wedge\gamma}+\frac{1}{\sqrt{N\delta^{d-1}}}+\frac{1}{N\delta^{d-1}}+N^{-\nicefrac{1}{(d+1)}}}.
\]
For $\delta=\delta_N$ and $r$ as in \cref{eq:BB-bandwidth-choice}, we have $\delta_N^{\alpha\wedge\gamma}=N^{-r}$ and $N\delta_N^{d-1}=N^{2r}$, so that the first three terms are bounded by $N^{-r}+N^{-r}+N^{-2r}$, and $N^{-\nicefrac{1}{(d+1)}}\le N^{-r}$ because $\alpha\wedge\gamma\le1$.
This proves \cref{eq:BB-rate} for all $N\ge2$ with $\delta_N\le\delta_0$. The finitely many remaining $N$ are absorbed into $C_0$, since $\varepsilon(\delta,N)\le\Expectation{\measureMassEmpirical{\delta,N}(\Omega)}+\measure(\Omega)\le 1/\tau+\measure(\Omega)$ by \cref{eq:weights-bounded}.
\end{proof}

\section*{Acknowledgements}
The authors used the AI systems Claude (Anthropic) and ChatGPT (OpenAI) in the preparation of this work: to draft and edit parts of the text, to check the proofs and the consistency between the manuscript, the code and the numerical results and to write parts of the code that produced the results of \cref{sec:numerical}. They reviewed and verified all content and take full responsibility for it.
T.E. is supported by Agence Nationale de la Recherche (ANR-23-PEIA-0004, PEPR IA PDE-AI), and N.I. by Japan Science and Technology Agency ACT-X (JPMJAX25C2) and Japan Society for the Promotion of Science KAKENHI (JP25K24910).

\printbibliography

\end{document}